\theoremstyle{plain}
\newtheorem{theorem}{Theorem}[section]
\newtheorem{cor}[theorem]{Corollary}
\newtheorem{lemma}[theorem]{Lemma}
\newtheorem{prop}[theorem]{Proposition}
\theoremstyle{definition}
\newtheorem{definition}[theorem]{Definition}
\newtheorem{question}{Question}
\newtheorem{conjecture}[question]{Conjecture}
\numberwithin{equation}{section}
\begin{document}

\title{A Lower Bound on the Average Size of a Connected Vertex Set of a Graph}
\author[A. Vince]{Andrew Vince}
\address{Department of Mathematics \\ University of Florida \\ Gainesville, FL 32611, USA}
\email{\tt  avince@ufl.edu} 
\subjclass[2010]{05C30}
\keywords{graph, connectedness, average order}
% \section*{Acknowledgement}
\thanks{This work was partially supported by a grant from the Simons Foundation (322515 to Andrew Vince).}

\maketitle

\begin{abstract}  
  The topic is the average order of a connected induced subgraph of a graph.  This generalizes, to graphs in general,
 the average order of a subtree of a tree.  In 1984, Jamison proved that the average order, over all trees of order $n$, is minimized by the path $P_n$. In 2018, Kroeker, Mol, and Oellermann conjectured that $P_n$ minimizes the average order over all connected graphs.  
The main result of this paper confirms this conjecture.
\end{abstract}

\section{Introduction} \label{sec:intro}

 Although connectivity is a basic concept in graph theory, problems involving 
the enumeration of the connected induced subgraphs 
of a graph have only recently received attention.  The topic of this paper is the average order of a connected induced subgraph 
of a graph.  Let $G$ be a connected finite simple graph with vertex set $V$, and let $U\subseteq V$. 
 The set $U$ is said to 
be a {\bf connected set} if the subgraph of $G$ induced by $U$ is connected.  Denote the collection of all connected sets, excluding the emptyset, by $\mathcal C = \mathcal C(G)$.  The number of connected sets in $G$ will be denoted by $N(G)$.   Let
\[S(G) = \sum_{U \in \mathcal C} |U|  \] 
be the sum of the sizes of the connected sets.
Further, let
\[ A(G) = \frac{S(G)}{N(G)}   \qquad  \qquad \text{and}  \qquad  \qquad   D(G) =\frac{ A(G)}{n} \]
denote, respectively, the average size of a connected set of $G$ and the proportion of vertices in an average size connected set.  The parameter $D(G)$ is referred to as the {\bf density} of connected sets of vertices.    The density
allows us to compare the  average size of connected sets of graphs of different orders.  The density is also the probability that a vertex chosen at
random from $G$ will belong to a randomly chosen connected set of $G$.  If, for example, $G$ is the complete graph $K_n$, then $A(K_n)$ is the average size of a subset of an $n$-element set, which is $n/2$ (counting the empty set for simplicity), the density then being $1/2$.

There are a number of papers on the average size and density of connected  sets in trees.  The
invariant $A(G)$, in this case, is the average order of a subtree of a tree. Although results are known for trees, beginning with Jamison's 1984 paper \cite{J}, nearly nothing is known for graphs in general.    We review the literature in  Section~\ref{sec:lit}.  Concerning lower bounds, Jamison proved that the density, over all trees of order $n$, is minimized by the path $P_n$.  In particular $A(T) \geq (n+2)/3$ for all trees $T$ with equality only for $P_n$; therefore $D(T) > 1/3$ for all trees.  
Kroeker, Mol, and Oellermann conjectured in their 2018 paper \cite{O} 
that $P_n$ minimizes the average size of a connected set over all connected graphs.  The main result of this paper confirms this conjecture.  

\begin{theorem} \label{thm:main}  If $G$ is a connected graph of order $n$, then 
\[A(G) \geq \frac{n+2}{3},\]
with equality if and only if $G$ is a path.  In particular, $D(G) > 1/3$ for all connected graphs $G$.  
\end{theorem}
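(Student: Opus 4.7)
The plan is to reduce the inequality for general connected graphs to Jamison's inequality for trees. Given a connected graph $G$ on $n$ vertices, fix a spanning tree $T$ of $G$. Every connected subset of $T$ is also connected in $G$, so $\mathcal C(T) \subseteq \mathcal C(G)$; write $\mathcal E = \mathcal C(G) \setminus \mathcal C(T)$ for the ``extras'', i.e., the sets that are disconnected in $T$ but made connected in $G$ by non-tree edges. Jamison's theorem gives $3 S(T) \ge (n+2)\, N(T)$, so the inequality
\[
A(G) = \frac{S(T) + \sum_{U \in \mathcal E} |U|}{N(T) + |\mathcal E|} \ge \frac{n+2}{3}
\]
reduces to showing $\sum_{U \in \mathcal E} |U| \ge \frac{n+2}{3}\, |\mathcal E|$, i.e., the extras must also have average size at least $(n+2)/3$.

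To analyze the extras, I would introduce the \emph{tree closure} map $\Phi\colon \mathcal E \to \mathcal C(T)$ sending each extra $U$ to the smallest $T$-connected set $\overline U$ containing it (the Steiner hull of $U$ in $T$). Since $T[U]$ is disconnected, $\overline U \supsetneq U$, so $|\overline U| > |U|$. One then partitions $\mathcal E$ into the fibers $\Phi^{-1}(W)$ for $W \in \mathcal C(T)$, and tries to convert Jamison's bound on $|W|$, together with the inequality $|W| > |U|$, into a bound on the fiber sums and hence on $\sum_{U \in \mathcal E} |U|$.

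The main obstacle is that $\mathcal E$ can contain sets of size as small as $2$: each non-tree edge $e = uv$ whose endpoints are non-adjacent in $T$ contributes the extra $\{u,v\}$. For large $n$ these are far below the target $(n+2)/3$, so a naive averaging will fail; the small extras have to be compensated by an abundance of large ones. Since every non-tree edge closes a fundamental cycle in $T$, producing extras of many sizes along that cycle, such compensation does occur, but making it quantitative is delicate. I would likely choose $T$ to be a DFS tree so that non-tree edges are back-edges along well-defined fundamental cycles, and then weight the fibers of $\Phi$ by the cycle lengths. For the equality case, $A(G) = (n+2)/3$ would have to force Jamison's tree inequality to be tight (giving $T = P_n$) and the extras inequality to be tight (forcing $\mathcal E = \emptyset$), so $G = T = P_n$.
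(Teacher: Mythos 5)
Your proposal is a plan rather than a proof, and its central reduction is not valid as stated. You reduce the theorem to the claim that the ``extras'' $\mathcal E=\mathcal C(G)\setminus\mathcal C(T)$ have average size at least $(n+2)/3$, treating Jamison's bound for $T$ as a black box. But that claim is false for natural choices of $T$: take $G$ to be a triangle $\{a,b,c\}$ with a path of $q$ vertices attached at $a$ (so $n=q+3$), and let $T$ be the spanning tree obtained by deleting the edge $bc$. Then the only set connected in $G$ but not in $T$ is $\{b,c\}$, so the extras have average size $2<(n+2)/3$ as soon as $n\ge 5$; the theorem still holds here only because $T$ is a spider, not a path, and the slack in Jamison's inequality for $T$ is of order $q^2$ --- exactly the term your reduction discards. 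So the compensation you hope for (``an abundance of large extras'') need not exist at all; when it fails, the rescue comes from the tree side of the ledger, which your framework cannot see. Choosing $T$ more cleverly (your DFS suggestion repairs this particular example, since the DFS tree is the Hamiltonian path and the extras then grow along it), one might hope the extras claim becomes true, but you give no argument, and proving it --- uniformly over all graphs, with control of small extras created by chords near cut-vertices, plus the equality analysis --- is precisely the whole difficulty of the theorem, not a detail. As it stands the proposal establishes nothing beyond the trivial inclusion $\mathcal C(T)\subseteq\mathcal C(G)$.

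For comparison, the paper's proof takes a genuinely different route: an induction on $n$ in which one deletes a single well-chosen vertex $x$ (one with $N(G,x)\ge S(G)/n$), powered by Theorem~\ref{thm:m1} (the average size of connected sets containing a fixed connected set $H$ of size $h$ is at least $(n+h)/2$), by the counting inequality of Theorem~\ref{thm:av} relating $N(G,x)$ to $N(G-x)$, and, when $x$ must be a cut-vertex, by the inequality of Theorem~\ref{thm:inequal}, with the sparse ``near trees'' handled separately in Corollary~\ref{cor:nt}. The edge-deletion/extras idea you propose does appear in the paper, but only in Lemmas~\ref{lem:one3} and \ref{lem:end}, in highly restricted situations where the extras can be listed explicitly and bounded via Corollary~\ref{cor:v}; the paper never attempts your global spanning-tree reduction, presumably for the reason illustrated above.
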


After reviewing the relevant literture in Section~\ref{sec:lit}, each of the Sections~\ref{sec:vertex}, \ref{sec:nt}, \ref{sec:x} and \ref{sec:cut} 
contain a  preliminary result required for the proof of Theorem~\ref{thm:main}.  In Section \ref{sec:x}, the result (Theorem~\ref{thm:m1}) concerns the average size a connected set of $G$ containing a fixed connected connected subset $H$.  In Section~\ref{sec:nt}, 
the result (Corollary~\ref{cor:nt}) is that certain very sparse  graphs satisfy the inequality in Theorem~\ref{thm:main}.   
In Section \ref{sec:x}, the result (Theorem~\ref{thm:av}) gives an inequality relating  the
number of connected sets containing a given vertex $x$ to the number of connected sets not containing  $x$. In  Section \ref{sec:cut}, the result
(Theorem~\ref{thm:inequal}) is an essential inequality valid for graphs with at least one cut-vertex.  Section~\ref{sec:lb} provides the final step
in the proof of Theorem~\ref{thm:main}.  Two problems that remain open are discussed in Section~\ref{sec:open}.

\section{Previous results} \label{sec:lit}

Following Jamison's study \cite{J},  a number of papers on the average order of a subtree of a 
tree followed \cite{H,J2,MO,SO,V,WW,YY}.  Concerning upper bounds, Jamison \cite{J}
 provided a sequence of trees (certain ``batons") showing that there are trees with density arbitrarily close to $1$.  
However, if the density $D(T_n)$  of a sequence  $T_n$ of trees tends to $1$, then the proportion of vertices of 
degree $2$ must also tend to $1$.  This led  to the question of upper and lower bounds on the density for trees whose internal
vertices have degree at least three.  Vince and Wang \cite{V} proved that if $T$ is a tree all of whose internal vertices have degree at least three, 
then $\frac 12 \leq D(T) < \frac34.$  Both bounds are best possible in the sense that there exists an
infinite sequence  $\{ ST_n\}$ of trees (stars, for example) such that $\lim_{n \rightarrow
\infty} D(ST_n) = 1/2$ and an infinite sequence $\{ CAT_n\}$ of trees (certain ``caterpillers") such that
$\lim_{n \rightarrow \infty} D(CAT_n)= 3/4$.  

A subtree of a tree $T$ is a connected induced subgraph of $T$.  So it is natural to extend from trees to graphs  $G$ by
asking about the average order of a connected induced subgraph of $G$ - or, in our terminology, the average
size of a connected set of vertices of $G$.   Kroeker, Mol, and Oellermann \cite{O} carried out such an 
investigation for cographs, i.e., graphs that contain no induced $P_4$.   For a connected cograph 
$G$ of order $n$, they proved that 
$n/2 < A(G) \leq (n+1)/2$, with equality on the right if and only if $n = 1$.  Complete bipartite graphs 
are examples of cographs.  In fact, cographs have the following known characterization: a  graph  $G$ is
a cograph if and only if $G = K_1$ or there exist two cographs $G_1$ and $G_2$ such that either $G$ is the disjoint
union of $G_1$ and $G_2$ or $G$ is obtained from the disjoint union by adding all edges joining the vertices
of $G_1$ and $G_2$.  Proving bounds on $A(G)$ for cographs is therefore amenable to an inductive approach 
not applicable to graphs in general.  Balodis, Mol, and Oellermann \cite{B}  proved that for block graphs of order $n$,  i.e., graphs for
which each maximal $2$-connected component is a complete graph, the path $P_n$ minimizes the average size of connected set.
A tree is a block graph, thus their result extends Jamison's lower bound from trees to block graphs.  Theorem~\ref{thm:main}
extends this lower bound to all connected graphs.  

\section{The Average Size of Connected Sets Containing a Given Connected Set} \label{sec:vertex}

If $V$ is the set of vertices of a connected graph $G$ and $H$ is a connected subset of $V$, let $N(G,H), S(G,H)$, and $A(G,H)$ denote the number of connected sets in $G$ containing $H$, the sum of the sizes of all connected sets containing $H$, and the average size of a connected set containing $H$, respectively.  

\begin{theorem} \label{thm:m1} If $H \subseteq V$ is a connected subset of size $h\geq 1$ of a connected graph $G$ of order $n$, then
\[A(G,H) \geq \frac{n+h}{2}.\]
\end{theorem}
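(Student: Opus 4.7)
I would prove this by induction on $n-h$. When $n=h$ the only connected superset of $H$ is $V=H$ itself, so $A(G,H)=n=(n+h)/2$ and the base case holds.

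For the inductive step with $n>h$, connectedness of $G$ guarantees a vertex $u \in V \setminus H$ adjacent to $H$. Partition the collection of connected sets containing $H$ into $\mathcal{A} \sqcup \mathcal{B}$, where $\mathcal{A}$ consists of those containing $u$ and $\mathcal{B}$ of those avoiding $u$; write $N_A, S_A$ and $N_B, S_B$ for the corresponding counts and sums of sizes. Since $H \cup \{u\}$ is connected of size $h+1$, the collection $\mathcal{A}$ equals the set of connected supersets of $H \cup \{u\}$, and the inductive hypothesis yields $2S_A \ge N_A(n+h+1)$. Every $U \in \mathcal{B}$ lies inside the component $K$ of $G-u$ containing $H$; writing $k=|K|$, the inductive hypothesis applied to the pair $(K,H)$ gives $2S_B \ge N_B(k+h)$. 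Both recursive calls strictly decrease $n-h$. Adding the two inequalities and comparing with the target $2(S_A+S_B)\ge (N_A+N_B)(n+h)$, a short algebraic manipulation reduces everything to the key sub-inequality
\[
N_A \;\ge\; (n-k)\,N_B.
\]

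The central combinatorial step is to establish this sub-inequality. Let $L = G[V \setminus K]$; this induced subgraph has $n-k$ vertices, contains $u$, and is connected because each component of $G-u$ is joined to $u$ in $G$. For every $U \in \mathcal{B}$ and every connected subset $W$ of $L$ containing $u$, the union $U \cup W$ is connected (the edge from $u$ to a vertex of $H \subseteq U$ provides the bridge), contains $H$, and contains $u$, so it lies in $\mathcal{A}$. The assignment $(U,W) \mapsto U \cup W$ is injective since $U = (U \cup W) \cap K$ and $W = (U \cup W) \cap (V \setminus K)$ can both be recovered from the union. Finally, the number of connected subsets of $L$ containing $u$ is at least $|L|=n-k$: in any spanning tree of $L$, the vertex set of the $u$-to-$x$ path as $x$ ranges over $V(L)$ gives $n-k$ distinct such subsets.

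The main obstacle is isolating the right decomposition and recognizing that the sub-inequality $N_A \ge (n-k)N_B$ is what drives the induction. Once one observes that $u$ is the sole bridge between $K$ and $V \setminus K$, the direct-product-style injection from $\mathcal{B}$ paired with connected supersets of $u$ in $L$ into $\mathcal{A}$ is the natural completion.
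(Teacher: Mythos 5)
Your proof is correct and follows essentially the same route as the paper: induction on $n-h$, splitting the connected supersets of $H$ according to whether they contain a chosen neighbor $u$ of $H$, applying the inductive hypothesis to $(G,H\cup\{u\})$ and to the component $K$ of $G-u$ containing $H$, and reducing to the counting inequality $N_A \geq (n-k)N_B$, which is exactly the paper's inequality $N(G,H+x) \geq (m+1)N(G_1,H)$. The only differences are cosmetic: you handle the paper's two cases (the chosen neighbor being a cut-vertex or not) uniformly, and your spanning-tree paths in $L=G[V\setminus K]$ together with the injection $(U,W)\mapsto U\cup W$, inverted by intersecting with $K$ and $V\setminus K$, give a slightly cleaner justification of the paper's path-attachment count.
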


\begin{proof}  The proof is by induction on the integer $d = n-h$.  If $d= 0$, then $H = V, \, N(G,H) = 1$ and $S(G,H) = n$.  Therefore $S(G,H) = n = 
 \frac{n+n}{2} \cdot 1 = \frac{n+h}{2} \, N(G,H)$.  This is the base case of the induction. Assume that the statement is true for $d-1$ and let $(G,H)$ be such that
$n-h = d$.  The remainder of the proof is divided into two cases.  Let $Q$ be the set of vertices that are adjacent to some vertex in $H$ but are not in $H$.  
\vskip 1mm

{\it Case 1.}  Assume that there is a vertex $x$ in $Q$ that is not a cut-vertex of $G$.  Let  $G' = G \setminus \{x\}$, which is a connected graph. For simplicity
we use the notation $H+x = H\cup\{x\}$. By the induction hypothesis  
\[ \begin{aligned} S(G,H) &= S(G',H)  + S(G,H+x) \geq \frac{(n-1)+h}{2} N(G',H)+ \frac{n+(h+1)}{2} N(G, H+x) \\ 
&= \frac{n+h}{2} (N(G',H) + N(G, H+x) ) + \frac{1}{2} (N(G,H+x) - N(G', H) ) \\ &= \frac{n+h}{2} N(G,H) + \frac{1}{2} (N(G,H+x) - N(G', H) ) 
\geq \frac{n+h}{2} N(G,H).\end{aligned}\]
The last inequality follows because, for each connected set $U$ counted in $N(G',H)$, the connected set $U+x$ is counted in $N(G,H+x)$.  (Note that
this is not true if $h=0$.)
\vskip 1mm

{\it Case 2.}  Assume that all vertices in $Q$ are cut-vertices of $G$, and let $x$ be one of these vertices.  Let $G' = G-x$.  Let $G_1$ be the connected component of $G-x$ containing $H$ and let $G_2$ be the union of the other components.  Denote the vertex set of $G_2$ by $V_2$, and let $m = |V_2|$.   Then
\[\begin{aligned} N(G,H) &= N(G,H+x)+ N(G',H) =  N(G,H+x) + N(G_1,H) \\  S(G,H) &= S(G,H+x)+ S(G',H) =  S(G,H+x) + S(G_1,H). \end{aligned}\]
 By the induction hypothesis
\[ \begin{aligned}   S(G,H) &= S(G,H+x) + S(G_1,H)  \geq \frac{n+(h+1)}{2} N(G, H+x) + \frac{(n-1-m)+h}{2} N(G_1,H) \\&=
\frac{n+h}{2} ( N(G, H+x) +N(G_1,H)  ) + \frac{1}{2} \Big (N(G,H+x) - (m+1) N(G_1, H) \Big ) \\ & =
\frac{n+h}{2}  N(G, H) + \frac{1}{2} \Big (N(G,H+x) - (m+1) N(G_1, H) \Big )  \geq \frac{n+h}{2}  N(G, H).
\end{aligned}\]
The last inequality is proved as follows.  Denote the vertices in $V_2$ by $x_1,x_2, \dots, x_m$.  Let $x_0 = x$, and let $p_i, \, 0\leq i \leq m$, be a path from
a point in $H$ adjacent to $x$ to $x_i$.  These paths all contain $x$.  For each connected set $W$ counted by $N(G_1,H)$,  let $W_i, \,  0\leq i \leq m$, be the union of $W$ and the vertices of $p_i$.  
Therefore, for each connected set counted by $N(G_1,H)$, there are at least $m+1$ connected sets counted by  $N(G,H+x)$.
\end{proof}  

\begin{cor} \label{cor:v}
If $x$ is any vertex of a connected graph $G$ of order $n$, then
\[A(G,x) \geq \frac{n+1}{2}.\]
\end{cor}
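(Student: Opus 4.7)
The plan is essentially immediate: the corollary is the specialization of Theorem~\ref{thm:m1} to a singleton $H$. The single-vertex set $\{x\}$ is trivially a connected subset of $V$ of size $h=1$, and the quantities $A(G,\{x\})$ and $A(G,x)$ agree by definition, since the connected sets containing $\{x\}$ as a subset are precisely the connected sets containing the vertex $x$. Applying Theorem~\ref{thm:m1} with $H=\{x\}$ then yields
\[
A(G,x) = A(G,\{x\}) \;\geq\; \frac{n+h}{2} \;=\; \frac{n+1}{2},
\]
which is the desired bound.

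Since the proof is a one-line instantiation, there is no real obstacle and no induction to redo at the corollary level. The only thing worth flagging is that the hypothesis $h\geq 1$ in Theorem~\ref{thm:m1} is satisfied (this hypothesis was used in Case~1 of the theorem's proof to ensure that each connected set $U$ containing $H$ in $G'=G\setminus\{x\}$ yields a connected set $U\cup\{x\}$ containing $H\cup\{x\}$ in $G$). For the singleton $H=\{x\}$ this condition holds, so no further work is required.

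If one preferred a self-contained derivation rather than quoting Theorem~\ref{thm:m1}, one could of course repeat the two-case argument (non-cut-vertex versus cut-vertex in the neighborhood of $x$) with $H=\{x\}$ and $h=1$ fixed throughout, but this would merely duplicate the proof already given and offers no advantage. I would therefore just state the corollary as an immediate consequence of Theorem~\ref{thm:m1}.
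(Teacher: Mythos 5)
Your proposal is correct and matches the paper exactly: the paper's proof of Corollary~\ref{cor:v} is precisely the observation that it is the case $h=1$ of Theorem~\ref{thm:m1}. Your extra remark about why $h\geq 1$ matters is accurate but not needed for the instantiation itself.
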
 

\begin{proof}  This is the case $h =1$ in Theorem~\ref{thm:m1}.
\end{proof}

\section{Near Trees} \label{sec:nt}

It will be helpful in investigating graphs with at least one cut-vertex to consider the {\bf block-cut tree} $\mathbb T = \mathbb T (G)$ of a graph $G$.  
The vertex set of $\mathbb T$ is the union of the cut-vertices of $G$ 
and the {\it blocks}, i.e., the maximal $2$-connected components, of $G$.  The latter include the edges of $G$.  
A cut-vertex $x$ and a block $B$ are adjacent in $\mathbb T$ if $x$ lies in $B$.  Call a block $B$ of $G$ a  {\bf leaf} if it is a leaf of
the tree $\mathbb T(G)$; otherwise call  $B$ {\bf interior}.  
Color the vertices $v$ in $\mathbb T$ corresponding
to cut-vertices in $G$ {\it blue} if $\deg_{\mathbb T}(v) \geq 3$. 
Color the corresponding vertices in $G$ also blue.  Color the vertices in $\mathbb T$ corresponding to a block $B$ in $G$ {\it red} if the
 order of $B$ is at least $3$.  
Color the corresponding blocks in $G$ also red.    

\begin{lemma} \label{lem:one3}  Assume that $G$ is a connected graph of order $n$ with exactly one red block $B$.  If $B$ has order $3$,
 then $A(G) > (n+2)/3$.
\end{lemma}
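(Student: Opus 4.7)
The plan is to reduce to Jamison's tree bound (stated in the introduction) by breaking the unique triangle with a well-chosen edge and accounting for the resulting ``extra'' connected sets via Corollary~\ref{cor:v}.

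Because $B$ is the only block of $G$ of order $\geq 3$ and every other block is a bridge, $G$ is obtained from the triangle $B$ on vertices $v_1,v_2,v_3$ by attaching a (possibly trivial) subtree $T_i$ at $v_i$, with $V(T_i)\cap V(B)=\{v_i\}$. Let $n_i=|V(T_i)|$, so $n_1+n_2+n_3=n$. Relabel so that $n_3=\min(n_1,n_2,n_3)$; in particular $n_3\leq n/3$.

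Now let $T=G-v_1v_2$, which is a spanning tree of $G$, and apply Jamison's theorem to obtain $S(T)\geq \tfrac{n+2}{3}\,N(T)$. Next I would classify the connected sets of $G$ relative to $T$: a connected set $U$ of $G$ is a connected set of $T$ unless the deleted edge $v_1v_2$ is a bridge of $G[U]$, which happens precisely when $v_1,v_2\in U$ and $v_3\notin U$. Such an $U$ decomposes uniquely as $U=U_1\cup U_2$ with $U_i$ a connected subset of $T_i$ containing $v_i$, so the bijection $(U_1,U_2)\leftrightarrow U_1\cup U_2$ shows that the number of such ``extra'' connected sets is $N(T_1,v_1)\,N(T_2,v_2)$ and the sum of their sizes is $S(T_1,v_1)\,N(T_2,v_2)+N(T_1,v_1)\,S(T_2,v_2)$. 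Their average size is therefore $A(T_1,v_1)+A(T_2,v_2)$, and applying Corollary~\ref{cor:v} to each $T_i$ together with $n_3\leq n/3$ gives
\[A(T_1,v_1)+A(T_2,v_2)\geq \frac{n_1+1}{2}+\frac{n_2+1}{2}=\frac{n-n_3+2}{2}\geq \frac{n+3}{3}>\frac{n+2}{3}.\]

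Finally, since at least one extra set exists (namely $\{v_1,v_2\}$), the contribution of the extras to $A(G)$ is strictly greater than $(n+2)/3$, while the contribution of the $T$-connected sets is at least $(n+2)/3$; the weighted average then satisfies $A(G)>(n+2)/3$. The main step to get right is the choice $n_3=\min n_i$: this makes $(n-n_3+2)/2>(n+2)/3$ automatic, so the strict inequality is produced by the extras alone and one never has to sharpen Jamison's bound on $T$.
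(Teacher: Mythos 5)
Your proposal is correct and follows essentially the same route as the paper: delete the triangle edge joining the two vertices carrying the largest attached trees, apply Jamison's bound to the resulting spanning tree, identify the extra connected sets with pairs $(U_1,U_2)$ so their average size is $A(T_1,v_1)+A(T_2,v_2)$, and bound this by Corollary~\ref{cor:v} to get $\geq (n+3)/3$. The only cosmetic difference is that you phrase the choice as $n_3=\min n_i$ while the paper keeps the two largest components, which is the same thing.
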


\begin{proof}  Let $v_1, v_2, v_3$ be the three vertices of $B$, and let $G_1, G_2, G_3$ be the corresponding 
connected components of $G\setminus \{v_1, v_2, v_3\}$.
For $i = 1,2,3$, let $G'_i$ be the tree induced by $V(G_i )\cup \{v_i\}$.  It is possible that $G_i$ is empty, in which case $G'_i = \{v_i\}$.  
Without loss of generality, let $G'_1, G'_2$ be the two with largest order.  Hence if $G'_1, G'_2$ have orders $n_1, n_2$, respectively,
then $n_1+n_2 \geq 2n/3$.  Let $e$ be the edge $\{v_1,v_2\}$ and let $T$ be the tree obtained by deleting $e$ from $G$.  From
\cite{J}, we know that $A(T) \geq (n+2)/3$.  If $\mathcal C  :=\mathcal C(G) \setminus \mathcal C(T)$, then let $N(\mathcal C) = |\mathcal C |$ and
let $S(\mathcal C)$ be the sum of the sizes of the sets in $\mathcal C$.  We claim that $S(\mathcal C)/ N(\mathcal C) > (n+2)/3$, which would
prove  Lemma~\ref{lem:one3}.  To simplify notation, let $N_i = N(G'_i,v_i)$ and $S_i = S(G'_i,v_i)$ for $i=1,2$.  Using Corollary~\ref{cor:v}, we have
\[\frac{S(\mathcal C)}{N(\mathcal C)} = \frac{S_1 N_2+S_2 N_1}{N_1 N_2} \geq \frac{(n_1+1)N_1 N_2 + (n_2+1)N_1 N_2 }{2 N_1 N_2} \geq 
\frac{2n/3 + 2}{2} = \frac{n+3}{3} >  \frac{n+2}{3}. \]
\end{proof}

\begin{figure}[htb] 
\vskip -4mm 
\includegraphics[width=6cm, keepaspectratio]{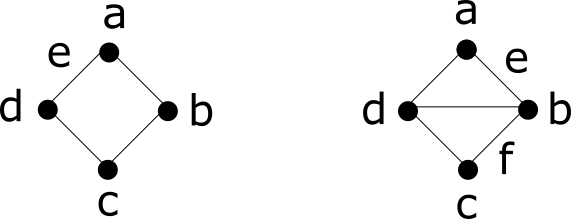} 
\vskip 2mm
\caption{Figure used in the proof of Lemma~\ref{lem:end}.}
\label{fig:0}
\end{figure}

\begin{lemma} \label{lem:end} Let $G$ be a graph that has no red interior blocks, and all red leaf blocks have order $3$ or $4$.  In 
addition, assume that all leaf blocks of order $4$ are of the form in Figure~\ref{fig:0} where vertex $a$ is the cut-vertex.  Then $A(G) > (n+2)/3$.
\end{lemma}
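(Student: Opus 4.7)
The plan is to extend the technique of Lemma~\ref{lem:one3} to handle multiple red leaf blocks. Throughout, write $\Delta(H) := 3 S(H) - (|V(H)|+2) N(H)$ for a connected graph $H$, so that the target inequality $A(G) > (n+2)/3$ is equivalent to $\Delta(G) > 0$. I would proceed by induction on the number $k$ of red leaf blocks. The base case $k = 1$ splits in two: if the unique red leaf is a triangle, invoke Lemma~\ref{lem:one3} directly; if it is an order-4 block of the form in Figure~\ref{fig:0}, run a parallel construction by deleting a carefully chosen set of edges of $B$ to produce a spanning tree $T$ of $G$, applying Jamison's bound $A(T) \geq (n+2)/3$, and using Corollary~\ref{cor:v} on the subtrees pendant at $a$ to show that the average size of $\mathcal{C}(G)\setminus\mathcal{C}(T)$ strictly exceeds $(n+2)/3$.

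For the inductive step with $k \geq 2$, pick a red leaf block $B$ with cut-vertex $a$, set $X := V(B)\setminus\{a\}$, and form $G' := G - X$. Then $G'$ is connected, still satisfies the hypotheses, has one fewer red leaf block, and has $n - |X|$ vertices. Partition $\mathcal{C}(G) = \mathcal{C}(G') \sqcup \mathcal{C}_{\mathrm{loc}} \sqcup \mathcal{C}_{\mathrm{ext}}$, where $\mathcal{C}_{\mathrm{loc}}$ consists of nonempty subsets of $X$ that induce a connected subgraph of $B$, and $\mathcal{C}_{\mathrm{ext}}$ consists of sets $U' \cup W$ with $U' \in \mathcal{C}(G',a)$ and $\emptyset \neq W \subseteq X$ such that $W \cup \{a\}$ is connected in $B$. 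A direct expansion gives
\[
\Delta(G) - \Delta(G') = -|X|\, N(G') + \bigl[3 s_0 - (n+2) c_0\bigr] + c_{+}^{*} \bigl[3 S(G',a) - (n+2) N(G',a)\bigr] + 3 s_{+}^{*} N(G',a),
\]
where $c_0, s_0$ are the count and size-sum of $\mathcal{C}_{\mathrm{loc}}$ and $c_{+}^{*}, s_{+}^{*}$ are the corresponding quantities for the admissible nonempty $W \subseteq X$. Bounding $S(G',a) \geq \tfrac{1}{2}((n-|X|)+1)\, N(G',a)$ via Corollary~\ref{cor:v}, substituting the specific values of $c_0, s_0, c_{+}^{*}, s_{+}^{*}$ for a triangle or for the order-4 block from Figure~\ref{fig:0}, and combining with the inductive hypothesis $\Delta(G') > 0$ should then yield $\Delta(G) > 0$.

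The main obstacle is the algebra in the inductive step. The negative terms $-|X|\, N(G')$ and $3 s_0 - (n+2) c_0$ (the latter negative for large $n$) must be outweighed by the positive bonus coming from $\mathcal{C}_{\mathrm{ext}}$, and Corollary~\ref{cor:v} controls $S(G',a)/N(G',a)$ but not the ratio $N(G',a)/N(G')$, so an auxiliary density bound at the cut-vertex $a$ is implicitly needed to push the inequality through. The specific form of the order-4 block in Figure~\ref{fig:0} is presumably dictated by this algebra: other 2-connected graphs on four vertices, or a different placement of $a$, give different constants $c_0, s_0, c_{+}^{*}, s_{+}^{*}$ and could break the bound, explaining why the hypothesis pins down a single block type. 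Extra care will also be required in the boundary case where $G'$ is a path, since then $\Delta(G') = 0$ by Jamison and strict positivity of $\Delta(G)$ must arise entirely from the block-contribution terms.
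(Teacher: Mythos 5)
Your decomposition is genuinely different from the paper's (you delete the vertices $X=V(B)\setminus\{a\}$ of the chosen red leaf block, while the paper deletes one or two \emph{edges} of that block, keeping all $n$ vertices), and as stated your inductive step has a real gap — one you half-acknowledge but do not close, and which is fatal to the chain of inequalities you write down. Because $G'=G-X$ has only $n-|X|$ vertices, the threshold inside $\Delta$ jumps from $(n-|X|+2)/3$ to $(n+2)/3$, producing the penalty $-|X|\,N(G')$, while every compensating term in your identity is proportional to $N(G',a)$. Corollary~\ref{cor:v} plus the bare positivity $\Delta(G')\geq 0$ cannot beat this, because $N(G',a)$ can be a vanishingly small fraction of $N(G')$. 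Concretely, when $B$ is a triangle one has $c_0=c_+^*=3$, $s_0=s_+^*=4$, and your plan reduces to needing $\tfrac{3(n+1)}{2}N(G',a) > 2N(G')+3n-6$. Take $G'$ to be a ``broom'': a path $a=v_0,v_1,\dots,v_\ell$ with $k$ pendant leaves attached at $v_\ell$. Then $N(G',a)=\ell+2^k$ while $N(G')=(\ell+1)2^k+\binom{\ell+1}{2}+k$; with $\ell=100$, $k=10$ (so $n=113$) the left side is $171\cdot 1124=192{,}204$ and the right side is $217{,}301$, so the required inequality is false even though $G'$ is a tree satisfying all the hypotheses (and one can glue a second triangle onto a pendant leaf so that this configuration occurs genuinely inside your inductive step, with essentially the same numbers). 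The lemma is of course true for such $G$ — here $\Delta(G')$ is in fact very large — but your argument discards $\Delta(G')$ down to $0$, and the cases where $\Delta(G')$ is small (path-like $G'$) are exactly the cases where $N(G',a)/N(G')$ is favorable; exploiting that trade-off would require either an auxiliary lower bound on $N(G',a)$ in terms of $N(G')$ for near trees, or a quantitative/stability strengthening of the induction hypothesis, neither of which you supply.

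The paper sidesteps this comparison entirely. It also inducts on the number of red leaf blocks, but the smaller graph $H$ is obtained by deleting edges of the chosen leaf block (one edge for one of the order-$4$ types, two edges for the other, similarly for $K_3$), so $H$ has the same $n$ vertices, one fewer red block, and $A(H)\geq(n+2)/3$ by induction at the \emph{same} threshold. The new sets $\mathcal C(G)\setminus\mathcal C(H)$ all attach through the cut-vertex $a$ via the restored edges, so their average size is bounded below using only Corollary~\ref{cor:v} applied to $(G',a)$ with $G'=G-\{b,c,d\}$; for instance in the first order-$4$ case the new sets have average $S/N+\tfrac53\geq \tfrac{n}{2}+\tfrac23>(n+2)/3$. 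Since both parts of the partition of $\mathcal C(G)$ have average at least $(n+2)/3$, with strictness on the new part, the conclusion follows by a mediant argument that never needs to compare the sizes of the two parts — precisely the comparison your vertex-deletion route cannot avoid. Either add the missing count-ratio lemma, or switch to the edge-deletion decomposition.
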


\begin{proof}  The proof is by induction on the number $m$ of red leaf blocks.  If $m=0$, then $G$ is a tree, and the 
result follows from \cite{J}.  Assume that the statement is true for $m-1$ and let $G$ be a graph with $m$ red leaf blocks.  

{\it Case 1.} Let $B$ be red leaf block of the type on the left in Figure~\ref{fig:0}.  If $H$ is the graph obtained from $G$ by deleting edge $e$, then $H$ has 
$m-1$ red blocks, and by the induction hypothssis $A(H) \geq (n+2)/3$ with equality if and only if $H$ is a path.   If $\mathcal C  :=\mathcal C(G) \setminus \mathcal C(H)$, then let 
$N(\mathcal C) = |\mathcal C |$ and let $S(\mathcal C)$ be the sum of the sizes of the sets in $\mathcal C$.  We claim that 
$S(\mathcal C)/ N(\mathcal C) > (n+2)/3$, which would prove Lemma~\ref{lem:end}.  Let $G'$ be the graph obtained from $G$ by deleting vertices $b,c,d$.  
To simplify notation, let $N = N(G',a)$ and $S = S(G', a)$.  Using Corollary~\ref{cor:v}, we have
\[  \frac{S(\mathcal C)}{ N(\mathcal C)} = \frac{(S+N)+(S+2N)+(S+2N)}{3 N} =\frac{S}{N} + \frac53 \geq \frac{(n-3)+1}{2} + \frac53 =
\frac{n}{2}+\frac{2}{3} > \frac{n+2}{3}.\]
In the first equality above, the term $(S+N)$ is for the connected sets containing just vertices $a$ and $d$; the first term $(S+2N)$ is for the connected sets containing just vertices $a,b$, and $d$; and the second $(S+2N)$ is for the connected sets containing just vertices $a, c$, and $d$.

{\it Case 2.} Let $B$ be red leaf block of the type on the right in Figure~\ref{fig:0}.  If $H$ is the graph obtained from $G$ by deleting edges $e.f$, then $H$ has 
$m-1$ red blocks, and by the induction hypothssis $A(H) > (n+2)/3$ (it cannot be a path).   If $\mathcal C  :=\mathcal C(G) \setminus \mathcal C(H)$, then let 
$N(\mathcal C) = |\mathcal C |$ and let $S(\mathcal C)$ be the sum of the sizes of the sets in $\mathcal C$.  We claim that 
$S(\mathcal C)/ N(\mathcal C) \geq (n+2)/3$, which would prove Lemma~\ref{lem:end}.  Let $G'$ be the graph obtained from $G$ by deleting vertices $b,c,d$.  
To simplify notation, let $N = N(G',a)$ and $S = S(G', a)$.  Using Corollary~\ref{cor:v}, we have
\[\frac{S(\mathcal C)}{N(\mathcal C)} = \frac{(S+N)+ 2}{N+1} \geq \frac{\big ((n-3)+1\big ) N/2  + N+2}{N+1} = \frac{nN+4}{2(N+1)} .\]
In the first equality above, the term $(S+N)$ is for the connected sets containing just vertices $a$ and $b$, and the term $2$ is for the single
connected set $\{b,c\}$.  It remains to show that  $(nN+4)(2N+2) \geq (n+2)/3$, which is equivalent to 
$(N-2)(n-4) \geq 0$, which holds.  

{\it Case 3.}  Let $B$ be a leaf block that is a $K_3$.  The proof in this case is a much simpler version of the proofs in cases 1 and 2.  
\end{proof}

\begin{definition}  \label{def:nt}
A {\bf near tree} is a graph $G$ such that one of the following holds:
\begin{enumerate}
\item $G$ is a tree;
\item $G$ has exactly one red block and that block is a $K_3$; or
\item $G$ has no interior red blocks and all leaf blocks have order $3$ or $4$.  
\end{enumerate}
\end{definition}

\begin{cor} \label{cor:nt}  If $G$ is a near tree, then $A(G) \geq (n+2)/3$, with equality if and only if $G$ is a path.  
\end{cor}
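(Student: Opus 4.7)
The plan is to unpack the three clauses of Definition~\ref{def:nt} and dispatch each with a result already proved in the preceding sections, so the corollary will fall out as a short case analysis. For clause (1) --- the tree case --- Jamison's 1984 inequality \cite{J} gives $A(T) \geq (n+2)/3$ for every tree $T$ of order $n$, with equality iff $T = P_n$; this handles both the bound and the equality characterization at once.

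For clause (2), $G$ has exactly one red block and that block is a $K_3$, which is precisely the hypothesis of Lemma~\ref{lem:one3}. So that lemma hands us the strict inequality $A(G) > (n+2)/3$. For clause (3), $G$ has no interior red blocks and every leaf block has order $3$ or $4$, which is exactly the setting of Lemma~\ref{lem:end}, yielding $A(G) > (n+2)/3$ again strictly. Combining the three cases gives $A(G) \geq (n+2)/3$ for every near tree, with equality possible only in clause (1) and then only for $G = P_n$; this is both halves of the corollary.

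The only real point of care is verifying that clause (3) of Definition~\ref{def:nt} actually matches the somewhat stronger structural hypothesis of Lemma~\ref{lem:end}, which asks in addition that every order-$4$ leaf block look like one of the configurations in Figure~\ref{fig:0} with $a$ playing the role of the cut-vertex. This reduces to a short enumeration of the $2$-connected graphs on four vertices (namely $C_4$, $K_4-e$, and $K_4$) and a check that each admissible leaf block is captured by one of Cases 1--3 of the proof of Lemma~\ref{lem:end}. I expect this structural bookkeeping to be the only step requiring any thought; once it is settled, the rest of the proof is a one-line invocation per clause, with the "equality iff path" part following because the strict inequalities in clauses (2) and (3) rule out equality outside the tree case.
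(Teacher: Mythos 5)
Your proposal is correct and follows essentially the same route as the paper, whose proof of Corollary~\ref{cor:nt} is a one-line appeal to Jamison's result for trees, Lemma~\ref{lem:one3}, and Lemma~\ref{lem:end}, exactly matching your case split over the three clauses of Definition~\ref{def:nt}. Your extra step of checking that every admissible order-$4$ leaf block in clause (3) really falls under the configurations of Figure~\ref{fig:0} is a point the paper glosses over by calling the corollary ``immediate,'' so that bit of bookkeeping is careful rather than redundant.
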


\begin{proof}  This is an immediate consequence of Lemmas~\ref{lem:one3} and \ref{lem:end}, and the fact from \cite{J}
that the statement is true for trees.  
\end{proof}

\section{An  inequality relating the number of connected sets containing a given vertex to the number of
connected sets not containing the vertex}  \label{sec:x}

Let $G$ be a connected graph and $x$ a vertex of $G$.  For ease of notation, let $G-x$ denote the subgraph of $G$ induced by $V(G) \setminus \{x\}$.
Let $T$ be a shortest distance spanning tree of $G$  rooted at $x$.  For each connected set $U \in \mathcal C(G-x)$ of vertices in $G-x$, choose a vertex $v_{U}$ that is closest to $x$, with distance being the length of the path $p_U$ in $T$ between $v_U$ and $x$.  Let $\overline U = U \cup p_U$, 
where we regard a path as its set of vertices. 
Let $\mathcal C(G,x)$ denote the set of connected sets in $G$ containing vertex $x$.  
For $Q \in \mathcal C(G,x)$, let
\[W(Q) = \{ U : U \in \mathcal C(G-x) \; \text{and} \;  \overline U = Q \}.\]
If $W(Q) \neq \emptyset$, 
there is a  linear order on $W(Q)$ defined by $U \preceq U'$ if $p_{U'} \subseteq p_U$.  Note that   $U \preceq U'$ implies that $U \subseteq U'$.
Let $U_Q$ denote the minimal set in $W(Q)$ with respect to this order, and let 
\[\mathcal M(G-x) = \{ U_Q : Q \in \mathcal C(G,x)\}\]
be the collection of all {\it minimals}. Note that $\{v\}$ is a minimal set for all $v\in V(G)$. Let  
\[av = av(G,x) =\frac{1}{|\mathcal M(G-x)|} \sum_{U \in \mathcal M(G-x)} |p_U|\] be the average length of the paths $p_U$ over all minimals $U$. 
Here  $|p_U|$ denotes the length of path $p_U$, i.e., the number of edges.

\begin{theorem} \label{thm:av}  For a connected graph $G$ and vertex $x$, we have
 \[ av(G,x) \cdot (N(G,x) - 1)  \geq N(G-x).\]
\end{theorem}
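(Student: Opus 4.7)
The plan is to prove the sharp identity
\[ N(G-x) \;=\; av(G,x)\cdot|\mathcal M(G-x)|, \]
and then to observe that $|\mathcal M(G-x)| \leq N(G,x) - 1$; since $av(G,x) \geq 1$, combining these yields the theorem.

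To obtain the identity, I would count $N(G-x)$ by the fibers of the map $U \mapsto \overline U$. The sets $W(Q)$ for $Q \in \mathcal C(G,x)$ partition $\mathcal C(G-x)$, so $N(G-x) = \sum_{Q:\, W(Q) \neq \emptyset} |W(Q)|$. The core combinatorial claim is that
\[ |W(Q)| \;=\; |p_{U_Q}| \qquad \text{whenever } W(Q) \neq \emptyset. \]
To prove it, write $p_{U_Q} = (u_0, u_1, \ldots, u_k = x)$ with $u_0 = v_{U_Q}$, so $|p_{U_Q}| = k$. The minimality of $U_Q$ in the order on $W(Q)$ forces, for every $U \in W(Q)$, the path $p_U$ to be a suffix $(u_j, u_{j+1}, \ldots, u_k)$ of $p_{U_Q}$ for some $0 \leq j \leq k-1$; then $v_U = u_j$, and since no vertex of $U$ can be strictly closer to $x$ than $v_U$, none of $u_{j+1}, \ldots, u_k$ lies in $U$. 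Thus $U \cap p_U = \{u_j\}$, which combined with $U \cup p_U = Q$ gives $U = U_Q \cup \{u_1, \ldots, u_j\}$. Conversely, for each $j \in \{0, 1, \ldots, k-1\}$ the set $U_j := U_Q \cup \{u_1, \ldots, u_j\}$ is connected in $G-x$ (it is $U_Q$ together with a subpath of $T$ attached at $u_0 \in U_Q$), has $u_j$ as its unique vertex closest to $x$ (hence $v_{U_j} = u_j$), and satisfies $\overline{U_j} = Q$. These two computations give a bijection $W(Q) \leftrightarrow \{0, 1, \ldots, k-1\}$, proving the claim.

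Since $Q \mapsto U_Q$ is a bijection between $\{Q \in \mathcal C(G,x) : W(Q) \neq \emptyset\}$ and $\mathcal M(G-x)$ (with inverse $U \mapsto \overline U$), summing the claim over such $Q$ gives $N(G-x) = \sum_{U \in \mathcal M(G-x)} |p_U| = av(G,x) \cdot |\mathcal M(G-x)|$. The same injection lands in $\mathcal C(G,x) \setminus \{\{x\}\}$ because $|p_U| \geq 1$ forces $|\overline U| \geq 2$, so $|\mathcal M(G-x)| \leq N(G,x) - 1$. Substituting into the identity produces the desired inequality.

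The main obstacle will be the bijection in the core claim. It rests on two intertwined uses of the definitions: the minimality of $U_Q$ pins $p_U$ to a suffix of $p_{U_Q}$, and the "closest vertex" property of $v_U$ forbids any strictly-closer interior vertex of that suffix from belonging to $U$. Once both constraints are combined the suffix index $j$ parameterizes $W(Q)$ uniquely, and the connectivity and nearest-vertex checks for each $U_j$ become short consequences of working inside the shortest-distance spanning tree $T$.
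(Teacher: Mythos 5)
Your proof is correct and takes essentially the same route as the paper: your fibers $W(Q)$ are exactly the paper's sets $Y(U_Q)$, and both arguments come down to the identity $N(G-x)=av(G,x)\cdot|\mathcal M(G-x)|$ together with $|\mathcal M(G-x)|\leq N(G,x)-1$ (the set $\{x\}$ being missed because $|\overline U|\geq 2$). The only difference is cosmetic: you verify the fiber-by-fiber bijection $|W(Q)|=|p_{U_Q}|$ in more detail than the paper, which simply asserts that the sets $Y(U)$, $U\in\mathcal M(G-x)$, partition $\mathcal C(G-x)$.
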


\begin{proof}
For a minimal set $U \in \mathcal M(G-x)$,  let $p_U = \{v_U = p_0, p_1, p_2, \dots, p_{k} = x\}$, vetices of $p_U$ in succession, 
where $k := k_U$ depends on $U$. Let
\[ Y(U) = \{ U \cup \{p_0, p_1, p_2, \dots, p_j \}: 0\leq  j < k_U\}.\]   
Clearly $|Y(U)| = |p_U|$.  Note  that the sets $Y(U)$ are pairwise disjoint, i.e., if $U\neq U'$, then $Y(U)\neq Y(U')$, and
\[\mathcal C (G-x) = \bigcup_{U \in \mathcal M(G-x)} Y(U).\] 
Thus $\{Y(U) : U \in \mathcal M(G-x) \}$ partitions $\mathcal C(G-x)$. 
 Consider the map $f: \mathcal C (G-x) \rightarrow \mathcal C (G,x)$ defined by $f(U) =\overline U$.  For $U \in \mathcal M(G-x)$, each set in $Y(U)$ is mapped
 to the same set in $\mathcal C (G,x)$; for distinct $U,U' \in \mathcal M(G-x)$ each pair of sets $A  \in Y(U)$ and $B \in Y(U')$ are mapped to 
distinct sets in $\mathcal C (G,x)$.  Therefore
\[N(G-x) = \sum_{U \in \mathcal M(G-x)} |p_U|  = av(G,x) \cdot |\mathcal M(G-x)| \qquad \text{and}  \qquad N(G,x) \geq |\mathcal M(G-x) |+1.\]
 The $+1$ is to count the vertex $x$ itself.  
Therefore $N(G,x) \geq N(G-x)/av(G,x) + 1$ and hence $av(G,x) \cdot (N(G,x) - 1)  \geq N(G-x)$.  
\end{proof}

\begin{theorem} \label{thm:2av} For any $2$-connected graph $G$ of order $n$ and any vertex $x$ of $G$, we have $av(G,x) \leq (n-1)/2$,
with equality if and only if $G=K_3$.  
\end{theorem}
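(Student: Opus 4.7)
The plan is to bound $|p_U|$ uniformly by exploiting 2-connectivity. For any vertex $v \neq x$, Menger's theorem (applied to the 2-connected $G$) produces two internally disjoint $v$--$x$ paths; their vertex sets meet only in $\{v, x\}$, so if their lengths are $\ell_1 \leq \ell_2$ then $\ell_1 + \ell_2 \leq n$, and therefore $d_G(v, x) = \ell_1 \leq \lfloor n/2 \rfloor$. Applied to $v = v_U$, this gives $|p_U| = d_G(v_U, x) \leq \lfloor n/2 \rfloor$ for every minimal $U \in \mathcal{M}(G-x)$.

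When $n$ is odd, $\lfloor n/2 \rfloor = (n-1)/2$ and the bound $av(G, x) \leq (n-1)/2$ follows immediately. For the equality case I observe that every singleton $\{v\} \subseteq V(G-x)$ is the set-inclusion minimum of its $W$-class (being a singleton) and hence lies in $\mathcal{M}(G-x)$, with $|p_{\{v\}}| = d_G(v,x)$. In particular, any neighbor $y$ of $x$ (guaranteed by 2-connectivity) gives a minimal $\{y\}$ with $|p_{\{y\}}| = 1$. Thus equality in the average forces every minimal to satisfy $|p_U| = (n-1)/2$, which requires $(n-1)/2 = 1$, i.e.\ $n = 3$; the only 2-connected graph on three vertices is $K_3$.

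The main obstacle is the even-$n$ case, where $\lfloor n/2 \rfloor = n/2$ is half a unit too large. To handle it I would split on whether $\max_{v \neq x} d_G(v,x) \leq n/2 - 1$ or equals $n/2$. The first sub-case is immediate. In the second, equality in Menger's inequality forces the two internally disjoint paths between $x$ and the witness $v^*$ to both have length $n/2$ and to exhaust $V(G)$, so they form a Hamilton cycle of $G$ with $v^*$ and $x$ antipodal. Every other vertex then has cycle-distance, and hence $G$-distance, strictly less than $n/2$ from $x$, so $v^*$ is the unique vertex at distance $n/2$. Moreover, any $U \supsetneq \{v^*\}$ contains some vertex strictly closer to $x$ than $v^*$, so $v_U \neq v^*$; consequently the singleton $\{v^*\}$ is the only minimal with $|p_U| = n/2$.

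Combining this with the trivial lower bound $|\mathcal{M}(G-x)| \geq n-1$ (one distinct singleton per vertex of $G-x$) yields
\[
\sum_{U \in \mathcal{M}} |p_U| \;\leq\; \tfrac{n}{2} \;+\; \bigl(\tfrac{n}{2} - 1\bigr)\bigl(|\mathcal{M}| - 1\bigr) \;=\; \bigl(\tfrac{n}{2} - 1\bigr)|\mathcal{M}| + 1,
\]
which is at most $\tfrac{n-1}{2}|\mathcal{M}|$ provided $|\mathcal{M}| \geq 2$, and strictly less than it when $|\mathcal{M}| > 2$. For $n \geq 4$ the singleton count already forces $|\mathcal{M}| \geq 3$, so the even case is strict and $K_3$ remains the unique equality instance in Theorem~\ref{thm:2av}.
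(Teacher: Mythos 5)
Your proof is correct, and while its skeleton (a diameter-type bound for $2$-connected graphs, an odd/even split, and in the tight even case a spanning cycle with $x$ and the far vertex antipodal) parallels the paper's, the two finishing steps differ in a worthwhile way. First, you derive $d_G(v,x)\le\lfloor n/2\rfloor$ directly from two internally disjoint $v$--$x$ paths, whereas the paper invokes the Caccetta--Smyth diameter theorem; your version is self-contained and gives the equality analysis (both paths of length $n/2$, union Hamiltonian) for free. Second, and more substantively, in the even case with a vertex $v^*$ at distance $n/2$ the paper determines $\mathcal{M}(G-x)$ explicitly from the shortest-path tree (the cycle minus one edge at the far vertex) and computes $av(G,x)$ exactly; that computation implicitly treats $G$ as being exactly the spanning cycle (chords are not addressed), and its description of the set of minimals is delicate. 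You avoid this entirely: you use only that $v^*$ is the unique vertex at distance $n/2$, hence the singleton $\{v^*\}$ is the only minimal whose path has length $n/2$, together with the crude count $|\mathcal{M}(G-x)|\ge n-1$ coming from the singletons; a one-line averaging estimate then gives strict inequality for every even $n\ge 4$, with no need to identify $\mathcal{M}(G-x)$ or the tree precisely. That robustness is a genuine advantage. The only point left implicit is the converse half of the equality clause, namely the immediate check that $av(K_3,x)=1=(n-1)/2$; add that line and the argument is complete.
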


\begin{proof}  According to \cite[Theorem 1]{C}, the diameter of a $2$-connected graph is at most $\lceil (n-1)/2\rceil$.  If $n$ is odd, then the
diameter is at most $(n-1)/2$.  Then clearly  average $av(G,x) < (n-1)/2$ unless $G = K_3$, in which case $av(G,x) = 1 = (n-1)/2$. 

If $n$ is even, let $y$ be a vertex of $G$ furthest from $x$. If the the distance from $x$ to $y$ is less than $n/2$, then 
it is clear that  $av(G,x)  \leq n/2 -1 < (n-1)/2$.  So assume that the distance between $x$ and $y$ is exactly $n/2$.  Since $G$ is $2$-connected, there is a cycle $C$ containing $x$ and $y$.    Because the distance between $x$ and $y$ is exactly $n/2$, the cycle $C$ contains all the vertices of $G$. A minimum distance spanning tree $T$ of $G$ contains all edges of $C$ except one that is incident to $y$, say $\{y,w\}$.  In this case
 $\mathcal M(G-x) = \{ \{v\} : v \in V(G), v\neq y\}\cup\{ \{w,y\} \}$. 
A simple calculation shows that $av(G.x) = n/4 + (1/2 - 1/n) < (n-1)/2$.  
\end{proof}

\begin{cor} \label{cor:av1} Let $H$ be a maximal $2$-connected subgraph of order at least $3$ of a graph $G$ of order $n$.   
Let $x$ be a vertex of $H$
 such that  the set of all neighbors of $x$ induce a complete subgraph of $H$.  Then $av(G,x) \leq (n-1)/2$, 
  with equality if and only if $G=K_3$. 
\end{cor}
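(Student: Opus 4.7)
The plan is to combine Theorem~\ref{thm:2av} applied to $H$ with a block-decomposition analysis of the vertices outside $V(H)$. First, if $V(H)=V(G)$, then $G$ contains the spanning $2$-connected subgraph $H$, so $G$ itself is $2$-connected and Theorem~\ref{thm:2av} immediately gives the desired bound with equality if and only if $G=K_3$. Henceforth I assume $V(H)\subsetneq V(G)$, so $G\ne K_3$ and a strict inequality is needed.

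The hypothesis that $N_G(x)$ induces a complete subgraph of $H$ forces every neighbor of $x$ to lie in $V(H)$, so $x$ belongs to only one block of $G$ (namely $H$) and in particular is not a cut vertex of $G$. By the block-cut tree, $G=H\cup L_1\cup\cdots\cup L_t$, where each $L_i$ is a connected subgraph of $G$ attached to $H$ at a single cut vertex $c_i\in V(H)\setminus\{x\}$; writing $\ell_i=|V(L_i)|-1$ and $m=|V(H)|$, we have $n-1=(m-1)+\sum_i\ell_i$. I would partition $\mathcal M(G-x)=\mathcal M_A\sqcup\mathcal M_B$ according to whether $v_U\in V(H)$ or $v_U\in V(L_i)\setminus\{c_i\}$ for some~$i$. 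A short argument shows that in the latter case $U\subseteq V(L_i)\setminus\{c_i\}$: any vertex of $U$ outside would force $c_i\in U$, but then $d_G(x,c_i)<d_G(x,v_U)$ would contradict the choice of~$v_U$. Hence the sets $\{U\in\mathcal M_B:v_U\in V(L_i)\}$ are in bijection with $\mathcal M(L_i-c_i)$, with $|p_U|$ in $G$ equal to $|p_U|$ in $L_i$ plus $d_G(x,c_i)$.

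Assembling the contributions expresses $av(G,x)$ as a weighted average of $S_A/|\mathcal M_A|$ (bounded above by $\lceil(m-1)/2\rceil$ via the $2$-connected diameter inequality used to prove Theorem~\ref{thm:2av}) and the pendant quantities $av(L_i,c_i)+d_G(x,c_i)$, with weights $|\mathcal M_A|$ and $|\mathcal M(L_i-c_i)|$ respectively. Comparing this weighted average against $(n-1)/2=(m-1)/2+\tfrac12\sum_i\ell_i$ and invoking Theorem~\ref{thm:2av} on $H$ to supply slack on the interior side yields $av(G,x)\le(n-1)/2$, with strict inequality preserved whenever $G\ne K_3$. The main obstacle I anticipate is controlling $av(L_i,c_i)$, since the pendant subgraphs $L_i$ need not satisfy the corollary's own hypotheses (for instance, a pendant path gives $av(L_i,c_i)=(\ell_i+1)/2$). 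I expect this is handled either by induction on $n$, removing a suitably chosen leaf block lying outside $V(H)$ and invoking the inductive hypothesis on the smaller graph, or by a direct accounting argument showing that any excess on the pendant side is absorbed by the strictness of the bound on the interior side. The equality case then reduces to that of Theorem~\ref{thm:2av}, namely $G=K_3$.
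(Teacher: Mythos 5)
Your reduction to the case $V(H)\subsetneq V(G)$, the observation that all neighbours of $x$ lie in $H$, and the splitting of $\mathcal M(G-x)$ into minimals meeting $V(H)$ and minimals confined to a single pendant piece $L_i$ are all sound, and they roughly parallel the paper's partition of the minimals into three classes. But the decisive step is missing, and you flag it yourself: the pendant quantity $av(L_i,c_i)+d_G(x,c_i)$ genuinely can exceed $(n-1)/2$ (already in your own example of a pendant path attached to $H=K_3$ the pendant minimals are the singletons at depth $j$ with $|p_U|=j+1$, giving average $(\ell_i+3)/2$ against $(n-1)/2=(\ell_i+2)/2$, and when $d_G(x,c_i)$ is of order $h/2$ the excess persists), so no termwise bound and no naive weighted-average comparison of ``interior part at most $(m-1)/2$, pendant part at most something'' can close the argument. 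One must show that the long-path pendant minimals are outnumbered, or paired off, by short-path minimals, and quantifying this is exactly the content of the proof. The paper does it with an injection your sketch does not contain: for each minimal $U$ with no vertex in $H$, take the subpath $p$ of $p_U$ from $U$ to the clique $K$ induced by the neighbours of $x$ and map $U$ to $U'=U\cup p\cup K$; because $K$ is complete, $U'$ is again a minimal with $|p_{U'}|=1$, the map is injective, and each pair $\{U,U'\}$ has average path length at most $\tfrac12\big((n-h)+\tfrac h2+1\big)\le (n-1)/2$, while the remaining minimals either meet $H$ (path length at most about $h/2$) or contain all of $K$ (path length $1$). Note that this is where the hypothesis that the neighbours of $x$ induce a complete subgraph does its real work; in your sketch that hypothesis is only used to conclude that the neighbours of $x$ lie in $H$ and that $x$ is not a cut vertex, which is a telltale sign that the key idea is absent.

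Your two fallback suggestions do not fill the hole as stated. Induction on $n$ by deleting a leaf block outside $V(H)$ founders on the fact that $av(G,x)$ is not controlled in any obvious way when the leaf block is added back: the new minimals it creates have path lengths up to $d_G(x,c')$ plus the depth of the block, which can exceed $(n-1)/2$, so the inductive bound on the smaller graph does not by itself bound the new average; to make it work you would again need precisely the absorption/pairing argument that is missing. The ``direct accounting'' alternative is likewise the whole difficulty, not a routine verification, since it requires comparing $|\mathcal M_A|$ (times its slack) against the number of pendant minimals with long paths, and these counts are not controlled in your setup. So the proposal is a reasonable framework with a genuine gap at the central inequality.
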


\begin{proof}  Theorem ~\ref{thm:2av} settles the case $G=H$, so assume that $H$ is a proper subgraph of $G$.  Construct a shortest distance 
spanning tree $T$ of $G$ by first constructing a shortest distance  spanning tree $T$ of $H$ and extending it to $G$.  Denote the order of $H$ by $h$.
Let $K$ denote the complete graph induced by the neighbors of $x$.  Note that no edge of $K$ is in $T$. 

Consider a set $U \in \mathcal M(G-x)$ such that $U$ has a vertex in $H$. As in the proof of Theorem~\ref{thm:2av}, if $h$ is odd, then
$|p_U| \leq (n-1)/2$.  If $h$ is even, then either $|p_U| \leq n/2 - 1$ or $U\cap H = \{y\}$, where $y$ is the unique vertex of $H$ at distance
$h/2$ from $x$.  We claim that $U\cap H = \{y\}$ is not possible.  Note that all edges incident with $y$ that are not in $H$ must be in $T$.  
Therefore, if $U' := U \setminus \{y\}$, then $U'  \prec U$, which implies that $U\notin \mathcal M(G-x)$, a contradiction.  

Next partition $\mathcal M(G)$ into three sets $A, B, C$ as follows.  We will consider the average 
of the $|p_U|$ for $U$ in each of the sets $A, B, C$.
Let $A$ consist of all those connected sets $U\in \mathcal M(G)$ such that $U$ has a vertex in $H$
but does not contain all vertices in $K$. 
Let $B'$ consist of all those connected sets $U \in \mathcal M(G-x)$ with no vertex in $H$.  For $U \in B'$ let $p$ be the subpath of 
$p_U$ with one end vertex in $K$ and the other in $U$.  
Let $U' = U \cup p \cup K$, and note that $U' \in \mathcal M(G)$ and $|p_{U'}| = 1$.  The map $f : B' \rightarrow \mathcal M(G)$ defined by 
$f(U) = U'$ is an injection. Let $B = B' \cup f(B')$.  Let $C$ be the complement of $A\cup B$ in $\mathcal M(G)$, and note
that  $|p_{U}| = 1$ for all $U \in C$.  

We have aleady shown that the average of the path distances $|p_U|$ for $U\in A$ is at most $(n-1)/2$ if $h$ is odd and at most ess than $(n-2)/2$
if $h$ is even.  Concerning the set $B$, the average 
\[(|p(U)|+|p(U')|)/2 \leq \begin{cases} \frac12 \big (( n-h) +\frac{h}2+1  \big ) = \frac12 \big (n- \frac{h}{2} + 1  \big ) \leq \frac{n-1}2
 \quad \text{if} \; h \; \text {is even} \\
\frac12 \big ( (n-h)+ \frac{h-1}{2} + 1  \big )  = \frac12 \big (n- \frac{h}{2} + \frac12  \big )  < \frac{n-1}2\quad \text{if} \; h \; \text{is odd}. \end{cases} \]
Therefore,  the average of the path distances $|p_U|$ for $U\in B$ is at most $(n-1)/2$. 
The path distance $|p_U| = 1$ for all connected sets  in $C$.  Therefore we have $av(G,x) < (n-1)/2$ unless $G = K_3$.  
\end{proof}

\section{An Inequality For Graphs With A Cut-Vertex} \label{sec:cut}

Let $x$ be a cut-vertex of a connected graph $G$ of order $n$, and let $M=M(x) = M(G,x)$ denote the number of  connected components of
$G-x$.  Denote these components by $G_1, \dots, G_M$, and let $n_1, \dots , n_M$ be their respective orders. 
Note that $n = 1+ n_1+n_2+\cdots + n_M$.   For $i=1,2, \dots, M$, denote by $G'_i $ the subgraph of $G$ induced
by the vertices $V(G_i)\cup \{x\}$.  To simplify notation, let $N_i= N(G_i)$ and $N_i(x) = N(G'_i,x)$.  Let $a_i = av(G'_i,x)/n_i$.

The main result of this
section is the following inequality, which is essential to our proof of Theorem~\ref{thm:main}. The proof of Theorem~\ref{thm:inequal} appears at the
end of this section, after several lemmas. 

\begin{theorem} \label{thm:inequal}  If $G$ is a connected graph with at least one cut-vertex, but not a near tree, then there is a cut-vertex $x$ such that
following inequality holds:
\begin{equation} \label{eq:main}  (n-1)  \prod_{i=1}^M N_i(x)  > 2 \sum_{i=1}^M  (n-n_i ) N_i. \end{equation}
\end{theorem}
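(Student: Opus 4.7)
The plan is to apply Theorem~\ref{thm:av} to each of the subgraphs $G'_i$. That theorem rewrites as $N_i \leq a_i n_i (N_i(x)-1)$, so the right-hand side of~\eqref{eq:main} is at most
\[
2 \sum_{i=1}^M (n-n_i)\, a_i n_i (N_i(x)-1).
\]
Using the identity $n-1 = \sum_i n_i$, the left-hand side can be written as $\sum_i n_i \prod_j N_j(x)$, so after dividing through by $\prod_j N_j(x)$ the inequality reduces to a numerical statement in the $n_i$, $a_i$, and $N_i(x)$ alone. The crude lower bound $N_j(x)\geq n_j+1$---obtained by taking path prefixes from $x$ in a shortest-distance spanning tree of $G'_j$---will then provide the room needed in the denominators.

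To invoke the non-near-tree hypothesis, I would choose the cut-vertex $x$ as follows. By Definition~\ref{def:nt}, at least one of three structural features must be present in $G$: an interior red block, a red leaf block of order at least five, or a leaf block of order four other than the one in Figure~\ref{fig:0}. In each case I would select $x$ to be a cut-vertex lying in the offending block and arranged so that at least one of the subgraphs $G'_i$ contains a maximal $2$-connected subgraph in which the neighbours of $x$ induce a clique. Corollary~\ref{cor:av1} then delivers $a_i n_i \leq n_i/2$, with strict inequality (the equality case $G'_i = K_3$ is absorbed into the near-tree classification). For any remaining tree-like component $G'_j$, the crude bound $a_j n_j \leq (n_j+1)/2$ --- saturated by a path attached to $x$ at an endpoint --- still holds.

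Combining these bounds and using $n - n_i = 1 + \sum_{j \neq i} n_j$ together with the product estimate $\prod_{j \neq i} N_j(x) \geq \prod_{j \neq i}(n_j+1)$, one is left with a purely algebraic inequality that should follow by elementary estimation, with the strict inequality inherited from the single index $i$ supplied by the choice of $x$. The manipulation will essentially amount to checking that the multiplicative ``boost'' $N_i(x)$ contributes to $\prod_j N_j(x)$ on the left is at least twice the additive contribution of $(n-n_i) N_i$ to the right.

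The main obstacle I foresee is precisely the loss of slack when several of the other components $G'_j$ are long paths attached to $x$ at an endpoint, since there $a_j n_j = (n_j+1)/2$ is exactly tight. The strict inequality must therefore be carried entirely by the distinguished component $G_i$ coming from the offending block, and one has to verify that this single improvement is enough to dominate an arbitrary number of near-tight terms on the right. I expect the delicate step to be quantifying this improvement sharply, possibly by supplementing Corollary~\ref{cor:av1} with a direct count of extra connected sets inside the offending block or by invoking Theorem~\ref{thm:m1} to tighten the relationship between $N_i(x)$ and $N_i$ beyond what Theorem~\ref{thm:av} alone provides.
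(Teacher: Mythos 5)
The core quantitative step of your plan rests on a false bound. You claim that for every non-distinguished component the ``crude bound'' $a_j n_j = av(G'_j,x) \le (n_j+1)/2$ holds, saturated by a path attached at an endpoint. This is not true, even for trees, and nothing forces the other components to be trees in the first place. Take $G'_j$ to be the path $x,u_1,\dots,u_k$ with two pendant vertices $w_1,w_2$ attached to $u_k$, so $n_j=k+2$: the minimals are $\{u_1\},\dots,\{u_k\}$ with path lengths $1,\dots,k$, the singletons $\{w_1\},\{w_2\}$ with length $k+1$ each, and $\{u_k,w_1,w_2\}$ with length $k$, giving $av(G'_j,x)=(k^2+7k+4)/(2(k+3))$, which exceeds $(n_j+1)/2=(k+3)/2$ as soon as $k\ge 6$. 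The paper never uses such a bound: for the other component it takes only the trivial estimate $N_j\le n_j(N_j(x)-1)$ (i.e.\ $a_j\le 1$), and compensates with a much stronger lower bound $N_i(x)\ge 2(n_i+1)$ on the distinguished side, supplied by Lemma~\ref{lem:twice} and Corollary~\ref{cor:twice1} (the exceptional graphs of Figure~\ref{fig:2} being excluded by how the offending block is chosen); Lemma~\ref{lem:d2} then shows that $a_i\le 1/2$ together with $N_i(x)\ge 2(n_i+1)$ closes the $M=2$ case. This factor-two count of connected sets through $x$ is exactly the ``boost'' your last paragraph senses is needed but does not produce; with your $(n_j+1)/2$ bound removed and only $N_i(x)\ge n_i+1$ available, the algebraic reduction does not go through.

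Two further gaps. First, you cannot in general ``arrange'' that the neighbours of $x$ induce a clique in the relevant maximal $2$-connected subgraph (if the offending block is a long cycle, every cut-vertex on it has two nonadjacent neighbours there), so Corollary~\ref{cor:av1} does not apply directly to any choice of $x$. The paper's device is to pass to $\widehat G$, obtained by adding all edges between the neighbours of $x$: the connected sets containing $x$ are unchanged while those avoiding $x$ can only increase, so it suffices to prove \eqref{eq:main2} for $\widehat G$, where the clique hypothesis holds. Second, the reduction of general $M$ to $M=2$ is not ``elementary estimation'' to be left implicit: the paper needs Lemma~\ref{lem:cut1} for $M\ge 4$ and most $M=3$ configurations, and Lemma~\ref{lem:cut2} to delete a smallest component and reduce the remaining $M=3$ cases, before the block-by-block analysis is run. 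Your proposal correctly identifies the relevant tools (Theorem~\ref{thm:av}, Corollary~\ref{cor:av1}, the non-near-tree structure), but without the $N_i(x)\ge 2(n_i+1)$ count, the $\widehat G$ construction, and the $M$-reduction lemmas, the argument as outlined fails.
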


The cut-vertex $x$ in Therorem~\ref{thm:inequal} will be called the {\bf root vertex} of $G$.  The theorem states that we can choose
a root vertex that satisfies inequality~\eqref{eq:main}.

\begin{lemma} \label{lem:twice}   If $x$ is a vertex of degree at least $2$ in a tree $T$ of order $n$, 
then $N(T,x) \geq 2n$ unless $T$ is one of the trees in Figure~\ref{fig:1}.
\end{lemma}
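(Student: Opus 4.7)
The plan is to root $T$ at $x$. With $d = \deg_T(x) \geq 2$, let $T_1, \ldots, T_d$ denote the components of $T - x$, let $r_i$ be the neighbor of $x$ in $T_i$, set $n_i = |T_i|$, and write $f_i := N(T_i, r_i)$ for the number of connected subsets of $T_i$ that contain $r_i$. Every connected set of $T$ containing $x$ is obtained by adjoining to $\{x\}$, independently for each $i$, either nothing or one of the $f_i$ connected subsets of $T_i$ containing $r_i$; consequently
\[ N(T,x) = \prod_{i=1}^d (1 + f_i), \]
and since $n = 1 + \sum_i n_i$, the task reduces to showing $\prod_i (1+f_i) \geq 2(1 + \sum_i n_i)$ outside the listed exceptions.

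The key intermediate fact I would prove first is that $f_i \geq n_i$, with equality if and only if $T_i$ is a path having $r_i$ as an endpoint. The inequality is elementary: the unique paths from $r_i$ to each of the $n_i$ vertices of $T_i$ already give $n_i$ distinct connected sets containing $r_i$. For the equality characterization I would induct on $n_i$ via the recursion $f_i = \prod_j (1 + f'_j)$, where $j$ ranges over the children of $r_i$ and $f'_j$ is the corresponding count in the child's subtree of order $m_j$: combining the inductive hypothesis $f'_j \geq m_j$ with the strict inequality $\prod_j(1+m_j) > 1 + \sum_j m_j$ whenever $r_i$ has at least two children forces $r_i$ to have a unique child, and iteration then forces $T_i$ to be a path through $r_i$.

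With these tools, I would split on $d$. For $d \geq 3$, the chain $N(T,x) \geq \prod_i(1+n_i) \geq 2(1 + \sum_i n_i)$ follows by induction on $d$: the base case $d = 3$ reduces to verifying $(1+a)(1+b)(1+c) \geq 2(1+a+b+c)$ for $a,b,c \geq 1$ (the difference is monotone increasing in each variable and vanishes at $a=b=c=1$, corresponding to $T = K_{1,3}$), and the inductive step uses $1 + n_{d+1} \geq 2$. For $d = 2$ I treat two subcases. When one of the $n_i$ equals $1$, say $n_1 = 1$, we have $N(T,x) = 2(1+f_2)$, so $N(T,x) \geq 2n$ is equivalent to $f_2 \geq n_2 + 1$, and by the equality characterization above this fails precisely when $T_2$ is a path rooted at $r_2$---that is, when $T$ is itself a path with $x$ second from an end. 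When both $n_1, n_2 \geq 2$, the bound $N(T,x) \geq (1+n_1)(1+n_2)$ together with the identity $(1+n_1)(1+n_2) - 2(1+n_1+n_2) = (n_1-1)(n_2-1) - 2$ yields $N(T,x) \geq 2n$ except when $n_1 = n_2 = 2$; in that residual case each $T_i$ is forced to be a single edge and $T$ is $P_5$ with $x$ at its center. I expect the main obstacle to be the clean inductive proof of the equality characterization $f_i = n_i \iff T_i$ is a path rooted at an endpoint; once that is in hand, matching the failure cases above with the trees drawn in Figure~\ref{fig:1} is routine.
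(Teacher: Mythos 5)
Your proof is correct, but it takes a genuinely different route from the paper. The paper verifies the claim by hand for all trees of order at most $6$ and then inducts on $n$: it deletes a leaf $y$ that is not a child of $x$ and observes that re-attaching $y$ creates at least two new connected sets containing $x$ (the path from $x$ to $y$, and that path together with a child of $x$ off the path), unless the smaller tree was already of the exceptional path type in Figure~\ref{fig:1}. You instead decompose at $x$ itself, using the product formula $N(T,x)=\prod_i(1+f_i)$ over the branches, the bound $f_i\geq n_i$, and the equality characterization ``$f_i=n_i$ iff the branch is a path rooted at its attachment vertex,'' followed by a case analysis on $\deg(x)$. What your route buys is an explicit, figure-free determination of the exceptional trees (paths with $x$ adjacent to an end vertex, and $P_5$ with $x$ at its center), with no finite check of small cases; the paper's leaf-deletion induction is shorter on the page but leans on the hand-verification for $n\leq 6$ and on the figure for the exceptional family. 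One small point to tighten: in your inductive step for $d\geq 4$, bounding $1+n_{d+1}$ below by $2$ only yields $4\bigl(1+\sum_{i\leq d}n_i\bigr)$, which need not dominate $2\bigl(1+\sum_{i\leq d+1}n_i\bigr)$ when $n_{d+1}$ is large; instead use $(1+n_{d+1})\bigl(1+\sum_{i\leq d}n_i\bigr)\geq 1+\sum_{i\leq d+1}n_i$, which holds since $\sum_{i\leq d}n_i\geq 1$ (or take $n_{d+1}$ minimal among the branch orders). With that one-line repair, your argument is complete and matches the statement of Lemma~\ref{lem:twice}.
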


\begin{proof}   It is routine to check that if $T$ is a tree of order at most $6$ with $\deg(x) \geq 2$, not a tree in Figure~\ref{fig:1}, then $N(T,x) \geq 2n$.  
Proceeding by induction on $n$, assume the statement is true for all trees of order $n$ with $n\geq 6$ and not in Figure~\ref{fig:1}, and let $T$ be tree of order $n+1$.
Remove a leaf $y$ of $T$, not a child of $x$, to obtain a tree $T'$ of order $n$.  By the induction hypothesis, either $N(T',x) \geq 2n$ or $T'$ is
a graph of the form on the left in Figure~\ref{fig:1}.  In the first case, adding $y$ back adds at least two new connected subtrees containing $x$,
the path $p$ from $x$ to $y$ and the union of $p$ and a child of $x$ not on $p$. Therefore $N(T,x) \geq N(T',x) \geq 2(n+1)$.
 In the second case, $T$ is of the type  on the left in Figure~\ref{fig:1}.  
\end{proof}

\begin{figure}[htb] 
\vskip -6cm
\hskip -7cm
\includegraphics[width=16cm, keepaspectratio]{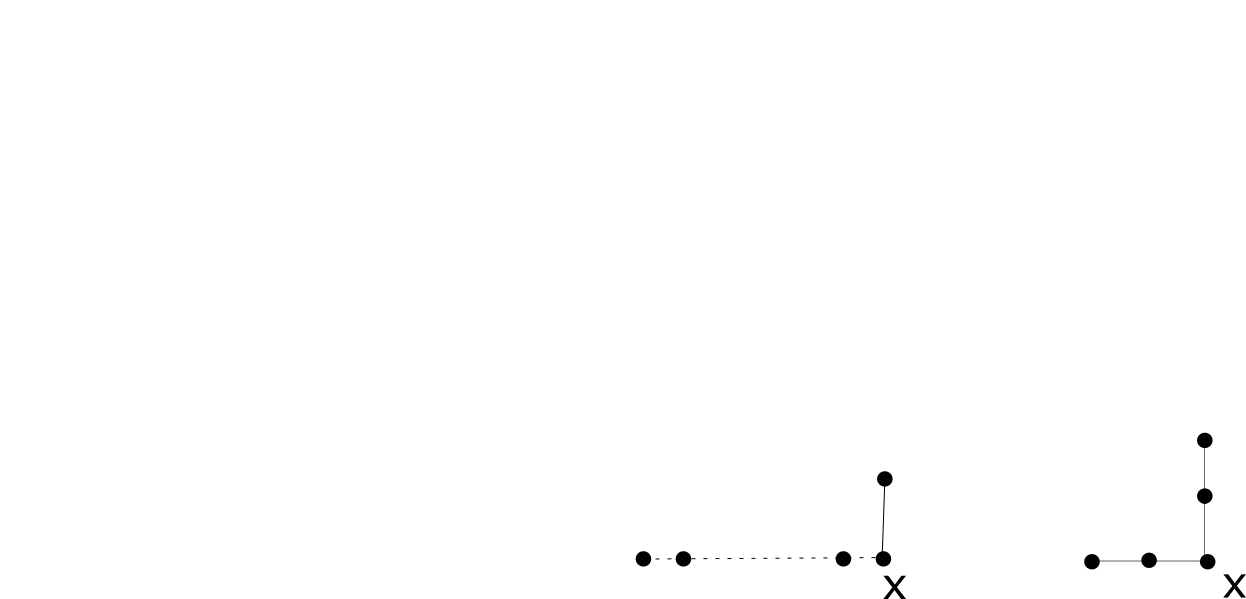} 
\vskip 3mm
\caption{A dashed line indicates any number of vertices. }
\label{fig:1}
\end{figure}

\begin{cor} \label{cor:twice1}   If $x$ is a vertex in a graph $G$ of order $n$, and $x$ is contained in a $2$-connected subgraph of $G$,
then $N(G,x) \geq 2n$ unless $G$ is a subgraph containing $x$ of
one of the graphs in Figure~\ref{fig:2}.  
\end{cor}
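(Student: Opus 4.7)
The plan is to reduce to Lemma~\ref{lem:twice} by passing to a spanning tree, then handle the finitely many ``near-miss'' configurations by direct inspection. Since $x$ lies in a $2$-connected subgraph of $G$ of order at least three, we have $\deg_G(x) \geq 2$. First I would let $T$ be a breadth-first spanning tree of $G$ rooted at $x$; then $\deg_T(x) = \deg_G(x) \geq 2$. Because every connected subset of $V(T)$ containing $x$ is also a connected subset of $V(G)$ containing $x$, we have $N(G,x) \geq N(T,x)$. If $T$ is not among the exceptional trees in Figure~\ref{fig:1}, then Lemma~\ref{lem:twice} yields $N(T,x) \geq 2n$ and we are done.

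The remaining work is to analyze $G$ when $T$ does have the shape of one of the trees in Figure~\ref{fig:1}. Since $x$ lies in a $2$-connected subgraph, $G$ strictly contains $T$, so there is at least one edge $e$ of $G$ not in $T$. Each such non-tree edge typically creates additional connected subsets of $G$ containing $x$: new sets arise as the union of a connected subset of $T$ containing $x$ with a path that only becomes available after $e$ is added. I would enumerate, for each shape in Figure~\ref{fig:1}, the positions in which a non-tree edge can be added consistent with the requirement that $x$ remain in a $2$-connected subgraph, count the new contributions to $N(G,x)$, and verify that either the total reaches $2n$ or else $G$ sits as a subgraph (containing $x$) inside one of the configurations displayed in Figure~\ref{fig:2}.

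The main obstacle will be the case analysis itself. Each exceptional shape of $T$ in Figure~\ref{fig:1} admits several qualitatively different placements of an added edge (joining two leaves on a common branch, joining a branch vertex to a distant leaf, forming a cycle not through $x$, and so on), and for each placement one must either count enough new connected sets to pass $2n$, or show that the resulting graph embeds into one of the graphs of Figure~\ref{fig:2}. A useful simplification is that any $G$ strictly larger than the configurations in Figure~\ref{fig:2} must also contain one of them plus an extra vertex or edge; such additions can be shown, by a short monotonicity argument, to increase $N(G,x)$ past $2n$. No conceptual machinery beyond Lemma~\ref{lem:twice} is needed; the work is combinatorial bookkeeping over a finite list of cases.
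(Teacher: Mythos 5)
Your proposal is correct and follows essentially the same route as the paper: the paper takes a spanning tree containing all edges incident to $x$ (your breadth-first tree rooted at $x$ accomplishes the same thing, preserving $\deg_T(x)\geq 2$), applies Lemma~\ref{lem:twice} to get $N(G,x)\geq N(T,x)\geq 2n$, and then disposes of the exceptional shapes of Figure~\ref{fig:1} by observing that $2$-connectivity forces extra edges yielding enough additional connected sets, with the graphs of Figure~\ref{fig:2} as the surviving exceptions. Your outline of the finite case check is in fact slightly more explicit than the paper's own one-sentence treatment of that step.
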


\begin{proof}  Let $T$ be a spanning tree of $G$ containing all edges incident to $x$.  By Lemma~\ref{lem:twice} we have  $N(G,x) \geq N(T,x) \geq 2n$  
unless $T$ is one 
of the trees in Figure~\ref{fig:0}.   In this latter case, however, the fact that $x$ lies in a $2$ connected graph implies  the existence of sufficiently many
additional edges to insure that $N(G,x) \geq 2 n$. 
\end{proof}

\begin{figure}[htb] 
\vskip -1mm
\centering
\includegraphics[width=11cm, keepaspectratio]{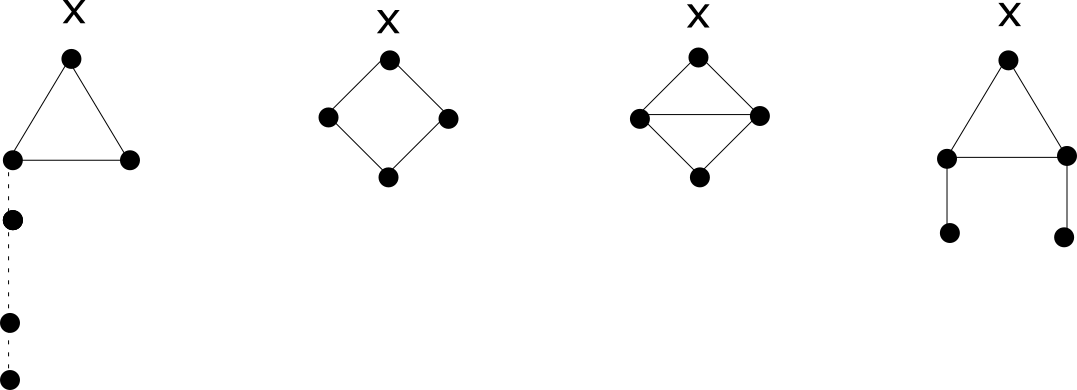} 
\vskip -1cm
 \caption{The dashed line indicates any number of vertices.}
\label{fig:2}
\end{figure}
\vskip 3mm

If $M(x)=2$ in the statement of Theorem~\ref{thm:main}, then inequality~\eqref{eq:main}  reduces to
\begin{equation} \label{eq:main2}  (n-1)  N_1(x)N_2(x)  > 2 (n_2+1) N_1 + 2 (n_1+1) N_2. \end{equation}

\begin{lemma} \label{lem:d2}   If $x$ is a cut-vertex of $G$ such that $M(x) = 2$ and both 
$N(G'_i,x) \geq 2(n_i+1)$ and $a_i \leq 1/2$ for either $i=1$or $i=2$,
 then  
\[ (n-1)  N_1(x)N_2(x)  > 2 (n_2+1) N_1 + 2 (n_1+1) N_2. \]\end{lemma}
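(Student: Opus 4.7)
The plan is to apply Theorem~\ref{thm:av} on each side of the cut-vertex $x$ to bound $N_1$ and $N_2$ in terms of $N_1(x)$ and $N_2(x)$, and then collapse the resulting polynomial inequality using the two parts of the hypothesis together with the trivial bound $N_2(x) \geq n_2+1$.

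Since the inequality \eqref{eq:main2} is symmetric under swapping the two components, I would assume without loss of generality that the hypothesis applies to $i=1$: $a_1 \leq 1/2$ and $N_1(x) \geq 2(n_1+1)$. Applying Theorem~\ref{thm:av} to $G'_1$ with vertex $x$, and noting $G'_1 - x = G_1$ so $N(G'_1-x) = N_1$, the assumption $a_1 \leq 1/2$ yields
\[
N_1 \;\leq\; a_1 n_1 (N_1(x) - 1) \;\leq\; \tfrac{n_1}{2}(N_1(x) - 1).
\]
On the other side I would use only the trivial bound $a_2 \leq 1$ (paths in a shortest-distance spanning tree of $G'_2$ have length at most $n_2$), giving
\[
N_2 \;\leq\; n_2 (N_2(x) - 1).
\]
I would also record the easy lower bound $N_2(x) \geq n_2+1$: take a BFS spanning tree of $G'_2$ rooted at $x$ and use the $n_2+1$ nested prefixes in BFS order.

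Substituting the two bounds on $N_1$ and $N_2$ into the right-hand side of \eqref{eq:main2}, it suffices to show
\[
(n_1+n_2)\,N_1(x)\,N_2(x) \;>\; n_1(n_2{+}1)\bigl(N_1(x)-1\bigr) + 2 n_2(n_1{+}1)\bigl(N_2(x)-1\bigr).
\]
Writing $A = N_1(x)$ and $B = N_2(x)$, group the left minus right as
\[
A\bigl[(n_1+n_2)B - n_1(n_2+1)\bigr] \;-\; 2 n_2(n_1+1)\,B \;+\; n_1(n_2+1) + 2 n_2(n_1+1).
\]
The bracket is nonnegative since $B \geq n_2+1$ implies $(n_1+n_2)B \geq (n_1+n_2)(n_2+1) \geq n_1(n_2+1)$. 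Thus I may substitute $A \geq 2(n_1+1)$; after simplification this lower-bounds the expression by
\[
2 n_1(n_1+1)\bigl[B - (n_2+1)\bigr] + n_1(n_2+1) + 2 n_2(n_1+1),
\]
which is strictly positive because $B \geq n_2+1$ and $n_1, n_2 \geq 1$.

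The main obstacle is securing \emph{strict} inequality. The dominant product term $(n_1+n_2) A B$ is exactly balanced by the ``linear-in-$A$ and linear-in-$B$'' contributions once $A \geq 2(n_1+1)$ is plugged in, and the two bounds $a_1 \leq 1/2$ and $N_1(x) \geq 2(n_1+1)$ conspire precisely so that this cancellation is tight. Strictness is then rescued by the residual constant $n_1(n_2+1) + 2 n_2(n_1+1)$, which comes from the $-1$ terms in Theorem~\ref{thm:av} (i.e.\ from $N_i \leq a_i n_i(N_i(x)-1)$ rather than $N_i \leq a_i n_i N_i(x)$). So the delicate step is arranging the substitutions to preserve these constant-order remainders; if instead one absorbs them by replacing $N_i(x)-1$ with $N_i(x)$, the computation degenerates to equality and the conclusion is lost.
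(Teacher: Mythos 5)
Your proof is correct and follows essentially the same route as the paper: both apply Theorem~\ref{thm:av} to each $G_i'$ (with $a_1\le 1/2$ on one side and the trivial $a_2\le 1$ on the other), then combine with $N_1(x)\ge 2(n_1+1)$ and $N_2(x)\ge n_2+1$ to finish by elementary algebra, with strictness coming from the $-1$ terms in Theorem~\ref{thm:av}. The only difference is bookkeeping: the paper splits $(n-1)=n_1+n_2$ and bounds two groups separately, while you carry the $-1$'s through a single grouped expression; the content is the same.
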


\begin{proof}  
Without loss of generality, assume that
$N_1(x) \geq 2(n_1+1)$ and $a_1 \leq 1/2$.
Then using Theorem~\ref{thm:av} and the obvious fact that $N_2(x) \geq n_2+1$
we have
\[\begin{aligned}   (n-1) & N_1(x)N_2(x)  - \big (2(n_2+1) N_1 + 2 (n_1+1) N_2 \big ) \\ &= \big (  n_1  N_1(x)N_2(x) - 
2 (n_2+1) N_1 \big ) + \big (  n_2  N_1(x)N_2(x) -  2 (n_1+1) N_2 \big ) \\ & \geq \big (  n_1  N_1(x)N_2(x) - 
2 (n_2+1) a_1 n_1 (N_1(x) -1) \big ) + \big ( n_2  N_1(x)N_2(x) -  2(n_1+1)  n_2 N_2(x) \big ) \\ & > 
 \big (  n_1  N_1(x)N_2(x) - 
2 (n_2+1) a_1 n_1 N_1(x) \big ) + \big ( n_2  N_1(x)N_2(x) -  2(n_1+1)  n_2 N_2(x) \big ) \\&=
n_1 N_1(x)\big ( N_2(x)-2(n_2+1) a_1\big ) + n_2 N_2(x)\big ( N_1(x)-2(n_1+1)\big ) \\& \geq
n_1 N_1(x)\big ( N_2(x)-(n_2+1)\big ) + n_2 N_2(x)\big ( N_1(x)-2(n_1+1)\big ) 
 \geq 0.  \end{aligned} \]
\end{proof}

\begin{lemma} \label{lem:cut1}   If there is a cut-vertex $x$ in $G$ such that either 
\begin{enumerate}
\item  $M(x) \geq 4$ or  
\item  $M(x)=3$ with $\min \{n_1,n_2, n_3\} \geq 2$ and at least two of $n_1,n_2, n_3$ are at least $3$,
\end{enumerate}
then inequality~\eqref{eq:main} holds with $x$ as the root of $G$.
\end{lemma}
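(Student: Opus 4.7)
The plan is to combine Theorem~\ref{thm:av} with the algebraic identity
\[(n-1) \prod_{i=1}^M N_i(x) = \sum_{i=1}^M n_i N_i(x) \prod_{j \neq i} N_j(x),\]
which follows from $\sum_i n_i = n - 1$. This reduces the target inequality~\eqref{eq:main} to a per-index comparison after replacing each $N_i$ by a suitable upper bound.

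Applying Theorem~\ref{thm:av} to each $G'_i$, which has order $n_i + 1$, yields $N_i = N(G'_i - x) \leq av(G'_i, x)(N_i(x) - 1) \leq n_i(N_i(x) - 1)$, since any path $p_U$ in a shortest-distance spanning tree of $G'_i$ rooted at $x$ has length at most $n_i$. The elementary lower bound $N_j(x) \geq n_j + 1$ holds because a shortest-distance spanning tree of $G'_j$ rooted at $x$ contains nested subtrees of sizes $1, 2, \ldots, n_j + 1$. Setting $y_i = N_i(x) \geq 2$, the crux of the argument will be to show that, under the hypotheses, $\prod_{j \neq i}(n_j + 1) \geq 2(n - n_i)$ for every $i$. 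Once this is available, so is $\prod_{j \neq i} y_j \geq 2(n - n_i)$, and the computation
\[y_i \prod_{j \neq i} y_j - 2(n - n_i)(y_i - 1) = y_i \Bigl(\prod_{j \neq i} y_j - 2(n - n_i)\Bigr) + 2(n - n_i) \geq 2(n - n_i) > 0\]
gives $n_i y_i \prod_{j \neq i} y_j > 2 n_i(n - n_i)(y_i - 1) \geq 2(n - n_i) N_i$ for each $i$. Summing over $i$ and invoking the displayed identity produces the strict inequality~\eqref{eq:main}.

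It remains to verify the key bound $\prod_{j \neq i}(n_j + 1) \geq 2(n - n_i)$ in both cases. For Case~(1), $M \geq 4$: the minimum of $\prod_{j \neq i}(n_j + 1)$ over integer $n_j \geq 1$ with $\sum_{j \neq i} n_j = n - 1 - n_i$ is $2^{M-2}(n - n_i - M + 2)$, attained when one $n_j$ absorbs the excess and the other $M - 2$ values equal $1$. Writing $t = n - n_i \geq M$ (each of the other $M - 1$ components has at least one vertex), the inequality $2^{M-2}(t - M + 2) \geq 2t$ rearranges to $t \geq (M - 2)\,2^{M-2}/(2^{M-2} - 2)$, a short check that is satisfied for all $M \geq 4$ using $t \geq M$. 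For Case~(2), $M = 3$: the inequality $(n_j + 1)(n_k + 1) \geq 2(1 + n_j + n_k)$ simplifies algebraically to $(n_j - 1)(n_k - 1) \geq 2$. The hypotheses (namely $\min n_i \geq 2$ and at least two of the $n_i$ are $\geq 3$) imply that for every $i$, the other pair $\{n_j, n_k\}$ has $\min \geq 2$ and $\max \geq 3$, so $(n_j - 1)(n_k - 1) \geq 2 \cdot 1 = 2$.

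The main subtlety is preserving strictness: the key bound can be tight (for example, at $(n_1, \ldots, n_M) = (n - 4, 1, 1, 1)$ for $M = 4$, or at $(n_1, n_2, n_3) = (2, 3, 3)$ in Case~(2)), so proving~\eqref{eq:main} in the original form with a simple coefficient match would fail by a boundary. This is precisely why the displayed identity above is written as $y_i(\prod_{j \neq i} y_j - 2(n - n_i)) + 2(n - n_i)$ rather than just the first summand — the additive slack of $2(n - n_i)$ coming from $y_i \geq 2$ is what guarantees that~\eqref{eq:main} remains strict after summing over all $i$.
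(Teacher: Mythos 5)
Your proposal is correct and follows essentially the same route as the paper: the identity $\sum_i n_i = n-1$, the bound $N_i \leq av(G'_i,x)\,(N_i(x)-1) \leq n_i(N_i(x)-1)$ from Theorem~\ref{thm:av}, the estimate $N_j(x)\geq n_j+1$, and the reduction to $\prod_{j\neq i}(n_j+1)\geq 2\bigl(1+\sum_{j\neq i}n_j\bigr)$, handled exactly as in the paper for $M=3$ via $(n_j-1)(n_k-1)\geq 2$. The only (welcome) difference is that you verify the $M\geq 4$ product inequality explicitly by a smoothing argument, whereas the paper asserts it as inequality~\eqref{eq:ip}, and you track the strictness slack a bit more carefully.
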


\begin{proof}  Using Theorem~\ref{thm:av} and the fact that $N_i(x) \geq n_i+1$ we have
\[\begin{aligned} (n-1) & \prod_{i=1}^M N_i(x)  - 2\sum_{i=1}^M  (n-n_i ) N_i = \sum_{i=1}^M \Big ( n_i N_1(x) N_2(x) \cdots N_M(x) - 
2(n-n_i ) N_i  \Big ) \\ & >  \sum_{i=1}^M \Big ( n_i N_1(x) N_2(x) \cdots N_M(x) - 2(n-n_i ) a_i  n_i N_i (x)\Big ) \\ & \geq  
\sum_{i=1}^M  n_i N_i(x) \Big  ( \prod_{j\neq i}  N_j(x) - 2 \big (1+ \sum_{j\neq i} n_j \big )\Big )
 \geq 
\sum_{i=1}^M  n_i N_i(x) \Big  (\prod_{j\neq i} (n_i +1) - 2 \big (1+ \sum_{j\neq i} n_j \big )\Big ).
\end{aligned} \]
If $M\geq 4$, then 
\begin{equation} \label{eq:ip} \prod_{j\neq i} (n_i +1) \geq 2 \big (1+ \sum_{j\neq i} n_j \big ),\end{equation}
verifying  inequality~\eqref{eq:main}.
If $M=3$, then, without loss of generality, assume that $i = 3$ in inequality~\eqref{eq:ip}, in which case
\[\prod_{j\neq i} (n_i +1) - 2 \big (1+ \sum_{j\neq i} n_j \big ) = n_1 n_2 - n_1 - n_2 -2 = (n_1-1)(n_2-1) -2,\]
which is greater than or equal to $0$ if $\min \{n_1,n_2\} \geq 2$ and $\max \{n_1,n_2\} \geq 3$.  Therefore
 \[(n-1)  \prod_{i=1}^M N_i(x)  > 2 \sum_{i=1}^M  (n-n_i ) N_i\]
if $M\geq 4$ or if $M=3$ with $\min \{n_1,n_2, n_3\} \geq 2$ and at least two of $n_1,n_2, n_3$ at least $3$.  
\end{proof}

\begin{lemma}  \label{lem:cut2}
 Let $x$ be a cut-vertex of $G$ with $M(x) \geq 3$ and denote the components of $G-x$ by $G_1, G_2, \dots, G_M$. 
Assume, without loss of generality, that $ \min\{ N_i : 1\leq i \leq M\} = N_M$.  
If the  graph $G'$ induced by the vertices $\{x\} \cup \bigcup_{i=1}^{M-1} V(G_i)$ satisfies \eqref{eq:main}, then $G$ also  satisfies \eqref{eq:main}.
\end{lemma}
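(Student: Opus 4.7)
The plan is to subtract the hypothesis for $G'$ from the target inequality for $G$ and show that the leftover is non-negative. Write $n'=n-n^*$, $N^*=N_M$, $N^*(x)=N_M(x)$, $n^*=n_M$, and $P'=\prod_{i=1}^{M-1}N_i(x)$, and set
\[
E \;:=\; (n'-1)P' \;-\; 2\sum_{i=1}^{M-1}(n'-n_i)N_i,
\]
so the hypothesis reads $E>0$. Using $\sum_{i=1}^{M-1}n_i = n'-1$ and $n=n'+n^*$, a routine expansion of the difference $L-R := (n-1)\prod_{i=1}^M N_i(x) - 2\sum_{i=1}^M (n-n_i)N_i$ produces the identity
\[
L-R \;=\; N^*(x)\,E \;+\; \sum_{i=1}^{M-1}N_i\,c_i \;+\; n^*N^*(x)P' \;-\; 2n'N^*, \qquad c_i := 2(N^*(x)-1)(n'-n_i) - 2n^*.
\]
Since $N^*(x)\cdot E>0$, it suffices to show that the sum $S$ of the last three terms on the right is non-negative.

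To estimate $S$ I will invoke three elementary facts: (a) $N^*(x)\geq n^*+1$, seen by counting $\{x\}$ together with the $n^*$ paths from $x$ to each vertex of $G_M$ in a spanning tree of $G'_M$; (b) $n'-n_i\geq M-1$ for each $i\leq M-1$, since the $M-1$ positive integers $n_1,\ldots,n_{M-1}$ sum to $n'-1$; and (c) $N_i\geq N^*$ for each $i\leq M-1$, by the minimality hypothesis $N_M=\min N_j$. Estimates (a) and (b) give $c_i\geq 2n^*(M-2)\geq 0$, so (c) lets me replace each $N_i$ in $\sum N_i c_i$ by $N^*$; a direct evaluation of $\sum c_i$ using (a) then yields
\[
S \;\geq\; N^*\bigl(2n^*(M-2)(n'-1) - 2n'\bigr) \;+\; n^*N^*(x)P'.
\]

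The main (and only) obstacle is that the coefficient of $N^*$ in this bound can be negative, so $S\geq 0$ does not follow termwise. That coefficient is non-negative precisely when $n^*(M-2)(n'-1)\geq n'$, which covers every case other than $M=3$ with $n^*=1$: for $M\geq 4$ one has $(M-2)(n'-1)\geq 2(n'-1)\geq n'$ since $n'\geq M\geq 4$, and for $M=3$ with $n^*\geq 2$ one has $n^*(n'-1)\geq 2(n'-1)\geq n'$ since $n'\geq M=3$. In the exceptional case $M=3$, $n^*=1$, the component $G_M$ must be a single vertex, forcing $N^*=1$ and $N^*(x)=2$, while $N_i(x)\geq 2$ for $i=1,2$ gives $P'\geq 4$; substituting then yields $S\geq -2+2P'\geq 6>0$. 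In every case $S>0$, so $L-R>0$, which is inequality \eqref{eq:main} for $G$.
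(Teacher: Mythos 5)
Your proof is correct and follows essentially the same route as the paper: both arguments scale the inequality for $G'$ by $N_M(x)$ and then bound the leftover terms using the minimality of $N_M$, the fact that $\sum_{i=1}^{M-1} n_i = n'-1$, and the estimate $N_M(x) \ge n_M+1$. The only differences are bookkeeping: you keep $n_M N_M(x)\prod_{i=1}^{M-1}N_i(x)$ as explicit positive slack and treat the case $M=3$, $n_M=1$ separately via $P'\ge 4$, whereas the paper applies the hypothesis a second time (dividing by the order of $G'$ minus one) and reduces everything to a single polynomial inequality that is monotone in $M$ and checked at $M=3$.
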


\begin{proof}  Assume that $(n-1)  \prod_{i=1}^{M-1} N_i(x)  > 2 \sum_{i=1}^{M-1}  (n-n_i ) N_i$ holds for the graph $G'$.  We must show
that 
\[ (n+n_M-1 )\prod_{i=1}^{M} N_i(x) \\  > 2 \sum_{i=1}^{M-1}  (n+n_M-n_i ) N_i + 2 n N_{M},\] i.e.,
\[ N_M(x) (n-1) \prod_{i=1}^{M-1} N_i(x)  +n_M  N_M(x) \prod_{i=1}^{M-1} N_i(x)  >  2 \sum_{i=1}^{M-1}  (n-n_i ) N_i  + 2 n_M \sum_{i=1}^{M-1} N_i + 2 n N_{M}.\] 
Because it is assumed that  $G'$  satisfies \eqref{eq:main2}, this reduces to showing that
\[ N_M(x)  2 \sum_{i=1}^{M-1}  (n-n_i ) N_i  +n_M  N_M(x) \frac{2 \sum_{i=1}^{M-1}  (n-n_i ) N_i }{n-1} \geq  2 \sum_{i=1}^{M-1}  (n-n_i ) N_i  + 2 n_M \sum_{i=1}^{M-1} N_i + 2 n N_{M},\] 
i.e.,
\[\sum_{i=1}^{M-1} \Bigg ( (N_M(x)-1) (n-n_i )  +n_M \Big ( \frac{  N_M(x)(n-n_i)}{n-1} -1 \Big ) \Bigg) N_i \geq n N_{M}.\]
By the minimality of $N_M$ and the fact that $\sum_{i=1}^{M-1} n_i = n-1$, it now suffices to show that

\[\begin{aligned}   \Big ( N_M(x)-1 &+\frac{ n_M N_M(x)}{n-1}\Big ) \big  (M n-2n+1 \big )  - (M-1) n_M\\ &=
\Big ( N_M(x)-1 +\frac{ n_M N_M(x)}{n-1}\Big ) \sum_{i=1}^{M-1} (n-n_i) - (M-1) n_M \\ &=
\sum_{i=1}^{M-1} \Bigg ( (N_M(x)-1) (n-n_i )  +n_M \Big ( \frac{  N_M(x)(n-n_i)}{n-1} -1 \Big ) \Bigg) \geq n.
\end{aligned} \]
Because $N_M(x) \geq n_M+1$, we have
\[ N_M(x)-1 +\frac{ n_M N_M(x)}{n-1} \geq \frac{n_M(n_M+n)}{n-1}.\]
To finish the proof, the following inequality is required:
\[ (M n-2n+1 \big )\big (n_M (n_M+n) \big ) -  \big ((M-1) n_M + n\big )(n-1) \geq 0.\]
As a function of $M$, the derivative of the left hand side of the inequality above is positive. Therefore it is sufficient to prove the inequality for $M=3$, i.e.,
\[(n+1)(n_3^2+n n_3) - (n-1)(2 n_3 + n) = n^2(n_3-1) + n n_3(n_3+1-2) +n_3^2+2n_3+n \geq 0,\]
which clearly holds.
 \end{proof}

\begin{proof}[{\bf Proof of Theorem~\ref{thm:inequal}}]  

If $G$ has a cut-vertex that satisfies condition (1) or  (2) in the hypothesis of Lemma~\ref{lem:cut1},  then, by that lemma, Theoerem~\ref{thm:inequal} is true.  Therefore it can be assumed that, for all cut-vertices $x$ of $G$,  either
\begin{enumerate}
\item[(a)] $M(x) = 2$, or
\item[(b)] $M(x) = 3$ and at least two components of $G-x$ have order at most $2$.
\end{enumerate}
In case (b), if $x$ is chosen as the root of $G$, then by Lemma~\ref{lem:cut2} it may be assumed that
\begin{enumerate}
\item[(b$'$)] a component of $G-x$ of smallest order has been removed and $M(x) = 2$ for the resulting graph.  
\end{enumerate}
\vskip 2mm

The proof is by cases.  
We will show that the inequality~\eqref{eq:main} holds when $G$ has:
\begin{enumerate}
\item  a red block of order at least $5$;
\item  an interior red block of order $4$;
\item  at least two red blocks, one of which is interior.
\end{enumerate}
Our assumption that $G$ is not a near tree eliminates the cases:
\begin{itemize}
\item $G$ is a tree;
\item $G$ has exactly one interior block of order $3$; 
\item all leaf blocks are of order $3$ or $4$.  
\end{itemize}
Hence cases 1-3 are exhaustive and their proof is sufficient to vertify Theorem~\ref{thm:inequal}.
\vskip 2mm

{\it Case 1.}  Let $B$ be a block in $G$ of order at least $5$, and let $x$ a cut-vertex in $B$.  
Choose $x$ as the root of $G$.  By items $(a)$ and $(b')$ above, it may be assumed that $M(x) = 2$.  
Inequality \eqref{eq:main2} must be verified.
  
Let $G'$ be the graph induced by the union of $x$ and the component of $G-x$ containing $B$.  
 Without loss of generality, let this be the component whose parameters have index $i=1$ in inequality \eqref{eq:main2}.  
Let $\widehat G$ be obtained from $G'$ by adding an edge between every pair of neighbors of $x$.  
Note that $G'$ and $\widehat G$ have the same set of vertices and the same set of connected sets containing $x$.  Also,
the number of connected sets in $\widehat G$ not containing $x$ is at least as large as the number of connected sets in $G'$ not containing $x$. 
Therefore if inequality \eqref{eq:main2} holds with $G'$ replaced by $\widehat G$, then it also holds for $G$. 
By Corollary~\ref{cor:av1} we have $a_1 \leq 1/2$, and by Corollary~\ref{cor:twice1} we have 
$N(\widehat G,x) \geq 2(n_1+1)$ - unless $\widehat G$ is a subgraph containing $x$ of one of the graphs in Figure~\ref{fig:2}.  
This is not possible since the order of $B$ is at least $5$.   By Lemma~\ref{lem:d2}, the proof of Case 1 is complete.  
\vskip 2mm

{\it Case 2.}   Let block $B$ in $G$ be of order $4$ and, since $B$ is interior, let $x$ and $y$ be distinct cut-vertices of $G$ on $B$.
Take $x$ as the root of $G$.  Let $G'$ be the graph induced by 
the union of $x$ and the component of $G-x$ containing $B$.  Note that $G'$ contains $y$, and therefore $G'$
cannot be a subgraph containing $x$ of any
graph in Figure~\ref{fig:2}. Hence $N(G',x) \geq 2(n_1+1)$ by Corollary~\ref{cor:twice1}.  
Now the proof proceeds as in Case 1.
\vskip 2mm

{\it Case 3.}    Let $B_1$ be a red block that is interior; let $B_2$ be another red block; and let $p$ be the unique path in $\mathbb T(G)$ joining $B_1$ and $B_2$.  Since $B_1$ is interior, by Case 2 we can assume that it has order $3$. 
Let $x'$ be a vertex in $\mathbb T(G)$ adjacent to $B_1$ that does not lie on $p$.   Let $x$ be the vertex in $G$ corresponding to $x'$, 
and choose $x$ as the root of $G$.  We may assume by $(a)$ and $(b')$ above that $M(x)=2$, which reduces the problem to proving inequality \eqref{eq:main2}.  Let $G'$ be the graph induced by the union of $x$ and the component of $G-x$ containing $B_1$ and $B_2$. 
By Corollaries~\ref{cor:av1} and \ref{cor:twice1}, we have $N(\widehat G,x) \geq 2(n_1+1)$ and $a_1 \leq 1/2$.
Lemma~\ref{lem:d2} completes the proof of Case 3.  
\end{proof}

\section{Proof of the Lower Bound Theorem} \label{sec:lb}

\begin{prop} \label{prop:nx}  If $G$ is a connected graph with vertex set $\{x_1, \dots, x_n\}$, then $S(G) = \sum_{i=1}^n N(G,x_i)$.
\end{prop}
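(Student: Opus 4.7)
The plan is to prove this by a straightforward double-counting argument applied to the bipartite incidence between vertices and connected sets. Specifically, I would consider the set of pairs
\[ P = \{(x_i, U) : U \in \mathcal C(G),\; x_i \in U\} \]
and count $|P|$ in two different ways.

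First, I would fix the connected set $U$ and vary the vertex. For each $U \in \mathcal C(G)$, the number of pairs $(x_i, U) \in P$ is exactly $|U|$, since every vertex of $U$ contributes one pair. Summing over all $U \in \mathcal C(G)$ gives
\[ |P| = \sum_{U \in \mathcal C(G)} |U| = S(G), \]
by the definition of $S(G)$ in the introduction.

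Second, I would fix the vertex and vary the connected set. For each $x_i \in V(G)$, the number of pairs $(x_i, U) \in P$ is exactly the number of connected sets $U$ that contain $x_i$, which is $N(G, x_i)$ by the notation introduced in Section~\ref{sec:vertex}. Summing over $i = 1, \dots, n$ yields
\[ |P| = \sum_{i=1}^n N(G, x_i). \]
Equating the two expressions for $|P|$ gives the desired identity. There is no real obstacle here; the statement is a Fubini-type rearrangement of the same sum over incidences, and the only thing to be careful about is that the collection $\mathcal C(G)$ excludes the empty set, which is harmless because the empty set would contribute $0$ to $S(G)$ and belong to no $N(G, x_i)$ anyway.
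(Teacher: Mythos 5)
Your proof is correct and is essentially identical to the paper's: the paper also counts the pairs $(x,U)$ with $x \in U \in \mathcal C(G)$ in two ways, obtaining $S(G) = \sum_{U \in \mathcal C} |U| = \sum_{x \in V(G)} N(G,x)$. Your added remark about the empty set being excluded is harmless and consistent with the paper's conventions.
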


\begin{proof} Count the number of pairs $(x,U)$ such that  $x\in V(G), \; U \in \mathcal C (G)$ and $x\in U,$ in two ways to obtain
\[ S(G) = \sum_{U \in \mathcal C}  |U| = \sum_{x \in V(G)} N(G,x) = \sum_{i=1}^n N(G,x_i).\]
\end{proof}

\begin{theorem} \label{thm:bound}  For a connected graph $G$ of order $n$ we have
\[ S(G) \geq \frac{n+2}{3}\, N(G),\]
with equality if and only if $G$ is a path.  
\end{theorem}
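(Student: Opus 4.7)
The plan is to proceed by strong induction on $n$, with the base case $n=1$ being trivial ($G=P_1$ with equality). I would split the inductive step into three cases based on the block structure of $G$: (i) $G$ is a near tree; (ii) $G$ has a cut-vertex but is not a near tree; (iii) $G$ is $2$-connected and not a near tree (equivalently, $G$ is $2$-connected of order at least $4$, since $K_3$ is a near tree by condition (2) of Definition~\ref{def:nt}). Case (i) is settled immediately by Corollary~\ref{cor:nt}, which already supplies both the bound and the equality characterization.

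In case (ii), the approach is to use Theorem~\ref{thm:inequal} to pick a cut-vertex $x$ satisfying \eqref{eq:main}. Writing $G_1,\dots,G_M$ for the components of $G-x$ and $G'_i$ for the subgraph induced on $V(G_i)\cup\{x\}$, I partition the connected sets of $G$ into those containing $x$ and those lying wholly inside some $G_i$, so that $N(G)=N(G,x)+\sum_i N(G_i)$ with $N(G,x)=\prod_i N_i(x)$, and $S(G)=S(G,x)+\sum_i S(G_i)$. Corollary~\ref{cor:v} gives $S(G,x)\geq\tfrac{n+1}{2}N(G,x)$, and the inductive hypothesis gives $S(G_i)\geq\tfrac{n_i+2}{3}N(G_i)$. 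A short algebraic manipulation then shows that the target inequality $S(G)\geq\tfrac{n+2}{3}N(G)$ is equivalent to $(n-1)\prod_i N_i(x)\geq 2\sum_i(n-n_i)N_i$, which is precisely \eqref{eq:main}; since the latter is strict, so is the target.

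Case (iii) follows the same template but with $G-x$ taking the role of the single ``component,'' for any chosen vertex $x$. Since $G$ is $2$-connected, $G-x$ is connected of order $n-1$. Split $S(G)=S(G,x)+S(G-x)$ and $N(G)=N(G,x)+N(G-x)$; apply Corollary~\ref{cor:v} and the inductive hypothesis to reduce the target to $(n-1)N(G,x)\geq 2N(G-x)$. Combining Theorem~\ref{thm:av} (which gives $N(G-x)\leq av(G,x)(N(G,x)-1)$) with Theorem~\ref{thm:2av} (which gives $av(G,x)\leq (n-1)/2$, strictly unless $G=K_3$) closes the case with strict inequality.

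The substantive work is already encoded in the preliminary theorems, so this final section should be largely a synthesis: the main subtlety is arranging the three cases so that exactly the right preliminary result is available in each. The strict inequalities produced in cases (ii) and (iii) confine equality to case (i), where Corollary~\ref{cor:nt} identifies paths as the unique extremals.
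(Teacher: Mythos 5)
Your proposal is correct, and it reaches the result by a genuinely different organization of the induction than the paper. The paper never cases on whether $G$ itself has a cut-vertex: it uses Proposition~\ref{prop:nx} to select an above-average vertex $x$ with $N(G,x)\geq S(G)/n$ and splits on whether \emph{that} vertex is a cut-vertex. Its non-cut-vertex case then requires a self-referential estimate, showing $(n-1)S(G)/n>2N(G-x)$ by reducing to the positivity of $3(n^2-1)N(G,x)+2(n^2-6n-1)N(G-x)$, which forces separate verification for $n\leq 6$; its cut-vertex case is your case (ii). Your case (iii) replaces the paper's Case 1 entirely: for a $2$-connected graph you take an arbitrary vertex and chain Theorem~\ref{thm:av} with Theorem~\ref{thm:2av} to get $2N(G-x)\leq 2\,av(G,x)\,(N(G,x)-1)\leq (n-1)(N(G,x)-1)<(n-1)N(G,x)$, with no averaging argument and no small-order checking; this works because your case (ii) already disposes of every non-near-tree with a cut-vertex via Theorem~\ref{thm:inequal}, so you never need to handle a non-cut-vertex inside a graph that has cut-vertices. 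Your case (ii) is essentially the paper's Case 2, with the harmless simplification of applying Corollary~\ref{cor:v} to $G$ at once instead of Theorem~\ref{thm:m1} to each $G'_i$ (after the product formula both give $S(G,x)\geq\frac{n+1}{2}N(G,x)$), and the reduction to \eqref{eq:main} and the confinement of equality to the near-tree case are handled correctly. The only quibble is your parenthetical claim that ``$2$-connected and not a near tree'' is equivalent to ``$2$-connected of order at least $4$'': Definition~\ref{def:nt}(3) is ambiguous for a $2$-connected graph whose unique block has order $4$, but this costs nothing, since your case (iii) argument in fact applies to every $2$-connected graph of order at least $3$ and yields a strict inequality, so any overlap with case (i) is benign. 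Net effect: your route trades the paper's above-average-vertex device and its small-$n$ case analysis for a direct use of Theorems~\ref{thm:av} and \ref{thm:2av}, giving a cleaner synthesis with base case $n=1$ only.
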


\begin{proof}  The proof is by induction on $n$.  The statement is easily checked for $n \leq 4$.  By Corollary~\ref{cor:nt}, it is also true for 
near trees as in Definition~\ref{def:nt}.  Assume it is true for graphs of order $n-1$, and let $G$ have order $n$.  
By Proposition~\ref{prop:nx}, the average of the numbers $N(G,x)$ over all vertices $x$ in $G$ is $S/n$.  
Let $x$ be a vertex such that $N(G,x) \geq S(G)/n$. 
Let $G' = G-x$. There are two cases.
\vskip 1mm

{\it Case 1.}  The vertex $x$ is not a cut-vertex, hence $G-x$ is connected.  From Theorem~\ref{thm:m1} and the induction hypothesis
\[\begin{aligned} S(G) &= S(G-x)+ S(G,x) \geq \frac{n+1}{3} N(G-x) + \frac{n+1}{2}  N(G,x) \\ &=  
\frac{n+2}{3} \big ( N(G-x) + N(G,x)\big ) - \frac13 N(G-x) + \frac{n-1}{6} N(G,x) \\
&= \frac{n+2}{3}  N(G) +\frac16 \Big ( (n-1) N(G,x) - 2 N(G-x) \Big ) \\ & \geq 
 \frac{n+2}{3}  N(G) +\frac16\Big ( (n-1) S(G)/n - 2 N(G-x) \Big ) 
\end{aligned}\]
It only remains to show that $(n-1) S(G)/n > 2 N(G-x)$.   But we have
\[ (n-1) S(G) = (n-1)\Big (   S(G-x)+ S(G,x) \ \Big )  \geq (n-1) \Big (\frac{n+1}{3} N(G-x) + \frac{n+1}{2}  N(G,x)   \Big ),\]
which is larger than $2 n N(G-x)$ if and only if
\[ 3(n^2-1) N(G,x) + 2(n^2-6n-1) N(G-x) > 0.\]
The polynomial $2(n^2-6n-1)$ is positive for $n\geq 7$.  The inequality for smaller values of $n$ can be checked using the facts that, for $n=4$ we have
$N(G,x) \geq 4, \, N(G-x)\leq 7$ (the path and the complerte graphs), for $n=5$ we have $N(G,x)\geq 5, \, N(G-x) \leq 15$, and for $n=6$ we have 
$N(G,x)\geq 6, \, N(G-x) \leq 31$.
\vskip 1mm

{\it Case 2.}  Every vertex such that $N(G,x) \geq S(G)/n$ is a cut-vertex. If $G$ is a near tree, then we are done.
Otherwise, by Theorem~\ref{thm:inequal}, there is cut-vertex $x$ that satisfies 
inequality~\eqref{eq:main}.  Let $G_1, \dots, G_M$ be the connected components of
$G-x$. Note that $n = 1+ n_1+n_2+\cdots + n_M$.   Now
\[\begin{aligned} N(G,x) &= \prod_{i=1}^M N(G'_i,x) \\  S(G,x) &= \sum_{i=1}^M (S(G'_i,x) -  N(G'_i,x) ) \prod_{j \neq i} N(G'_j,x) + \prod_{i=1}^M N(G'_i,x) \\
&=\sum_{i=1}^M S(G'_i,x) \prod_{j \neq i} N(G'_j,x) - (M-1) \prod_{i=1}^M N(G'_i,x).
\end{aligned}\] 
In the formula for $S(G,x)$, the terms  $-  N(G'_i,x) )$ and   $\prod_{i=1}^M N(G'_i,x)$ are to count the vertex $x$ the correct number of times.  
By the induction hypothesis and Theorem~\ref{thm:m1} we have
\[\begin{aligned}  S(G) &= S(G-x) + S(G,x) = \sum_{i=1}^M S(G_i) + \sum_{i=1}^M S(G'_i,x) \prod_{j \neq i} N(G'_j,x) - (M-1) \prod_{i=1}^M N(G'_i,x) \\ & \geq
\sum_{i=1}^M \frac{n_i+2}{3} N(G_i) + \sum_{i=1}^M \Big ( \frac{n_i+2}{2} N(G'_i,x)  \prod_{j \neq i} N(G'_j,x) \Big )- (M-1) \prod_{i=1}^M N(G'_i,x)  \\ & =
\sum_{i=1}^M \frac{n_i+2}{3} N(G_i) + \frac{n+1}{2} \prod_{i=1}^M N(G'_i,x).
\end{aligned}\]
It remains to show that the expression in the last line above is greater than
\[\frac{n+2}{3} N(G) = \frac{n+2}{3} \big ( N(G-x) + N(G,x)\big ) =\frac{n+2}{3} \Big ( \sum_{i=1}^M  N(G_i) +   \prod_{i=1}^M N(G'_i,x) \Big ).\]  
This is equivalent to showing that 
\[ (n-1)  \prod_{i=1}^M N(G'_i,x)  > 2 \sum_{i=1}^M  (n-n_i ) N(G_i), \]
which is exactly inequality~\eqref{eq:main} in Theorem~\ref{thm:inequal}.  
\end{proof}

\section{Two Open Problems} \label{sec:open}  

Although, for a general connected graph, the lower bound of Theorem~\ref{thm:main} is best possible, evidence indicates that 
$D(G) > 1/2$ for a large class of graphs.  The
result of Kroeker, Mol, and Oellermann \cite{O} referenced in Section~\ref{sec:lit}, for example, proves that this is the case for cographs.  
We made the following conjectured in \cite{V3}. 
 
\begin{conjecture}  For any graph $G$, all of whose vertices have degree at least $3$, we have $D(G) > \frac12$.  
\end{conjecture}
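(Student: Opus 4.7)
The plan is to adapt the inductive framework of Sections~\ref{sec:vertex}--\ref{sec:lb}, tightening each estimate so as to target $A(G) > n/2$ in place of $A(G) \geq (n+2)/3$. The minimum-degree-$3$ hypothesis should allow every tree-style estimate to be strengthened, since the presence of many short alternate paths forces connected sets to grow quickly. The induction would run on $n$, with base cases $n \leq n_0$ verified by direct enumeration over the (finite, well-catalogued) cubic and minimum-degree-$3$ graphs of small order.

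For the inductive step in the $2$-connected case, fix a vertex $x$ and split $S(G)$ via Proposition~\ref{prop:nx}. Corollary~\ref{cor:v} already supplies $A(G,x) \geq (n+1)/2 > n/2$ for the contribution from $\mathcal{C}(G,x)$; the delicate part is the contribution from $\mathcal{C}(G-x)$, where the minimum-degree condition may fail at the neighbors of $x$ so that the inductive hypothesis is not directly available. I would instead appeal to an enhanced form of Theorem~\ref{thm:av} together with a sharpened Theorem~\ref{thm:2av}: when every vertex has degree at least $3$, a shortest-distance spanning tree rooted at $x$ admits many alternate ``shortcut'' edges, so $av(G,x)$ should be substantially smaller than $(n-1)/2$, forcing $N(G,x)$ to dominate $N(G-x)$ and thereby dragging the overall average above $n/2$.

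For graphs with a cut-vertex, the block-structure strategy of Section~\ref{sec:cut} carries over with extra leverage: the minimum-degree-$3$ hypothesis forbids leaf blocks of order at most $2$, so every leaf block is red, has order at least $3$, and contains at least two vertices of degree at least $3$. This should suffice to set up an analog of Theorem~\ref{thm:inequal} with target $(n-1)/2$ rather than $(n+2)/3$, after which the decomposition in the proof of Theorem~\ref{thm:bound} transfers almost verbatim, aggregating the per-block contributions to yield the stronger bound.

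The main obstacle, and likely the reason this remains open, is twofold. First, minimum degree $3$ is not hereditary under vertex deletion, so any induction must pass through graphs only \emph{almost} satisfying the hypothesis; quantifying how much slack to carry in the inductive hypothesis (e.g.\ tracking the number and location of degree-$2$ vertices created by deletion) is the principal technical challenge. Second, no explicit sequence of minimum-degree-$3$ graphs is known whose density approaches $1/2$ from above, so the extremal family is unclear; without a concrete target one cannot calibrate the sharpness required at each inductive step, and estimates that are too crude will fail to close the final constant-order gap.
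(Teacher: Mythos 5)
This statement is not a theorem of the paper at all: it is Conjecture~1 in Section~\ref{sec:open}, explicitly left open, and the paper itself explains why the obvious inductive attack is problematic (the class of graphs with $D(G)>\tfrac12$ is not well understood, and adding an edge or a vertex can push the density in either direction). What you have written is a research plan, not a proof, and by your own admission it leaves the decisive steps unproved. Concretely: the ``enhanced form of Theorem~\ref{thm:av}'' and ``sharpened Theorem~\ref{thm:2av}'' are never stated, let alone proved, and the claim that minimum degree $3$ forces $av(G,x)$ substantially below $(n-1)/2$ is exactly the kind of quantitative statement the whole difficulty hides in; the proposed analog of Theorem~\ref{thm:inequal} with target $(n-1)/2$ is likewise only asserted. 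Moreover the arithmetic you would need is much more demanding than in the paper's proof of Theorem~\ref{thm:bound}: since $S(G)=\sum_x N(G,x)$ and $N(G)=N(G,x)+N(G-x)$, the bound $D(G)>\tfrac12$ is equivalent to saying that on average $N(G,x)>N(G-x)$, i.e.\ a typical vertex lies in more than half of all connected sets. The paper's lower bound only needs the far weaker comparison $(n-1)N(G,x)\gtrsim 2N(G-x)$, which is why Theorem~\ref{thm:av} with $av(G,x)\le (n-1)/2$ suffices there; for your target the same mechanism would require something close to $av(G,x)\le 1$, which is false in general, so the ``drag the average above $n/2$'' step does not follow from any routine sharpening of the existing lemmas.

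The second fatal issue is the one you flag yourself but do not resolve: the hypothesis of minimum degree $3$ is destroyed by deleting $x$ (its neighbors drop degree), so the inductive bound $A(G-x)\ge (n-1)/2$ is simply unavailable, and without it the split $S(G)=S(G-x)+S(G,x)$ together with Corollary~\ref{cor:v} gives nothing stronger than what is already in the paper. Carrying ``slack'' for degree-$2$ vertices created by deletion is not a minor bookkeeping matter: the paper notes there are graphs of minimum degree $2$ with density below $\tfrac12$, so any relaxed inductive hypothesis strong enough to pass through $G-x$ is false as stated and would have to be replaced by a genuinely new invariant. In short, the proposal identifies the right obstacles but supplies no idea that overcomes them; the conjecture remains open, and this text should not be presented as a proof of it.
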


One difficulty in proving this conjecture, if true, is that knowing exactly for which graphs $D(G) > \frac12$ is problematic .  There are
graphs, all of whose vertices have degree at least $2$, whose density is less than $1/2$ and some whose density is greater.  Adding an
edge to a graph may increase the density or it may decrease the density, similarly for adding a vertex. This makes a proof by
induction challenging.  

As mentioned in Section~\ref{sec:lit}, there are trees whose density is arbitrarily close to $1$.  For trees where every vertex has 
degree at least $3$, the density is bounded above by $3/4$ and this is best possible \cite{V}. A family of cubic graphs appearing in \cite{V3} has
asymptotic density $5/6$.  We know of no graph, all of whose vertices have degree at least $3$, with a larger density.    

\begin{question}  Is there an upper bound, less than $1$, on the density of graphs all of whose vertices have degree at least $3$?
\end{question}

\end{document}